\newtheorem{theorem}{Theorem}
\newtheorem{corollary}[theorem]{Corollary}
\newtheorem{definition}[theorem]{Definition}
\newtheorem{example}[theorem]{Example}
\newtheorem{lemma}[theorem]{Lemma}
\newtheorem{remark}[theorem]{Remark}
\newenvironment{proof}[1][Proof]{\noindent\textbf{#1.} }{\ \rule{0.5em}{0.5em}}
\begin{document}

\centerline{{\Large Semiconjugate Factorizations of Higher Order }}
\vspace{0.5ex} \centerline{{\Large Linear Difference Equations in Rings}}

\vspace{1ex}

\centerline{H. SEDAGHAT \footnote{Department of Mathematics, Virginia Commonwealth University Richmond, Virginia, 23284-2014, USA; Email: hsedagha@vcu.edu}}

\vspace{2ex}

\begin{abstract}
We study linear difference equations with variable coefficients in a ring using
a new nonlinear method. In
a ring with identity, if the homogeneous part of the linear equation has a
solution in the unit group of the ring (i.e., a unitary solution) then we show that
the equation decomposes into two linear equations of lower orders. This
decomposition, known as a semiconjugate factorization in the nonlinear theory,
generalizes the classical operator factorization in the linear context.
Sequences of ratios of consecutive terms of a unitary solution are used to
obtain the semiconjugate factorization. Such sequences, known as
eigensequences are well-suited to variable coefficients; for instance, they
provide a natural context for the expression of the classical
Poincar\'{e}-Perron Theorem. We discuss some applications to linear
difference equations with periodic coefficients and also derive formulas for the
general solutions of linear functional recurrences satisfied by the classical
special functions such as the modified Bessel and Chebyshev.

\end{abstract}

\bigskip

\section{Introduction}

A linear, non-homogeneous difference equation with variable coefficients is
defined as
\begin{equation}
x_{n+1}=a_{0,n}x_{n}+a_{1,n}x_{n-1}+\cdots+a_{k,n}x_{n-k}+b_{n} \label{Lde}%
\end{equation}
where $\{b_{n}\}$ and $\{a_{j,n}\}$ are given sequences in a nontrivial ring
$R$ for $j=0,1,\ldots,k.$

Equation (\ref{Lde}) may be unfolded in the standard way to a map of a
$(k+1)$-dimensional $R$-module (or vector space, if $R$ is a field). In this
form there is substantial published research that extends the classical theory
on the real line to a diverse selection of rings that include finite rings,
rings of polynomials and other rings of functions as well as abstract rings of
various types. See, e.g., \cite{abu}, \cite{bir}, \cite{bro}, \cite{hen},
\cite{kur}, \cite{lak}, \cite{rob}, \cite{sen}.

We pursue a different approach in this paper, using a method originally
developed for nonlinear difference equations. Specifically, we explore the
existence of a \textit{semiconjugate factorization} of (\ref{Lde}) in its
underlying ring $R$ into two or more linear difference equations of lower
orders. While a linear equation may have many different semiconjugate
factorizations, we seek one where the equations of lower orders are also
linear. The key requirement for the existence of a semiconjugate factorization
is the existence of a special type of sequence that we call an
\textit{eigensequence} of the homogeneous part of (\ref{Lde}); eigenvalues are
essentially constant eigensequences.

We show that in a ring $R$ with identity, quotients of the consecutive terms
of a unitary (invertible) solution form an eigensequence in $R$. So it is not
surprising that the classical \textit{Poincar\'{e}-Perron Theorem } finds a
simple and natural expression in this context; see Section \ref{ppt} below.
Semiconjugate factorization generalizes the classical notion of operator
factorization of a homogeneous equation with constant coefficients over the
real or complex numbers. More generally, semiconjugate factorizations of
linear equations over arbitrary nontrivial fields have been studied in Chapter
7 of \cite{fsor}. The results in this paper substantially extend those in
\cite{fsor} to rings and add several new results, including results for
equations with periodic coefficients and for recurrences in rings of
functions; see Sections \ref{sper}-\ref{func} below.

Establishing the existence of an eigensequence and calculating it are problems
that are more challenging in rings than in fields but the rewards are also
potentially greater. In particular, rings of functions on a given set
typically are not fields but the semiconjugate factorization method applies to
linear difference equations in such rings in essentially the same way (finding
an eigensequence) as it does to linear difference equations in a field. We
study semiconjugate factorizations of linear difference equations in rings of
functions and apply the results to obtain explicit formulas for the solutions
of some known higher order functional difference equations; e.g., second-order
recurrences that define modified Bessel functions or Chebyshev polynomials,
but with arbitrary initial functions.

\section{Preliminaries\label{scfg}}

A (forward) \textit{solution} of (\ref{Lde}) is defined, as usual, to be any
sequence $\{x_{n}\}$ in $R$ that satisfies the equation for $n\geq0$. Given
the recursive nature of (\ref{Lde}) it is clear that with any $k+1$ given
initial values $x_{j}\in R$ for $j=0,1,\ldots,k$ (the number $k+1$ being the
order of the difference equation) (\ref{Lde}) generates a unique solution in
$R$ through iteration, since $R$ is closed under its addition and
multiplication. If $b_{n}=0$ for all $n$ then (\ref{Lde}) is
\textit{homogeneous} and in this case, the constant sequence $x_{n}=0$ for
$n\geq0$ is a solution of (\ref{Lde}), namely, the trivial solution.

In the classical theory of higher order linear difference equations in the
field of real numbers, linear operators such as the ones used to define
(\ref{Lde}) may be \textquotedblleft factored\textquotedblright\ using their
eigenvalues; see, e.g., Section 2.3 in \cite{elay}. This elementary procedure
yields both a reduction of order for a linear difference equation and a
symbolic \textquotedblleft operator method\textquotedblright\ for obtaining
its solutions. For discussions of these basic classical notions, including
operator methods, see \cite{elay} or \cite{jor}.

In this section, for the reader's convenience we present some general results
from \cite{fsor} that are valid for all difference equations of recursive
type, not just the linear ones.

Let $\mathcal{G}$ be a nontrivial group and consider the recursive difference
equation, or recurrence%
\begin{equation}
x_{n+1}=f_{n}(x_{n},x_{n-1},\ldots,x_{n-k}) \label{dek}%
\end{equation}
where $f_{n}:\mathcal{G}^{k+1}\rightarrow\mathcal{G}$ is a given function for
each $n\geq0.$ Starting from a set of $k+1$ initial values $x_{j}%
\in\mathcal{G}$ for $j=0,1,\ldots,k$ a unique solution of (\ref{dek}) is
obtained by iteration.

Equation (\ref{dek}) may be unfolded in the usual way to a first-order
recurrence
\[
X_{n+1}=\mathfrak{F}_{n}(X_{n})
\]
on $\mathcal{G}^{k+1}$ where $\mathfrak{F}_{n}:\mathcal{G}^{k+1}%
\rightarrow\mathcal{G}^{k+1}$. Let $k\geq1$\textit{, }$1\leq m\leq k.$ Suppose
that there is a sequence of maps $\Phi_{n}:\mathcal{G}^{m}\rightarrow
\mathcal{G}^{m}$ and a sequence of surjective maps $H_{n}:\mathcal{G}%
^{k+1}\rightarrow\mathcal{G}^{m}$ that satisfy the \textit{semiconjugate
relation} $H_{n+1}\circ\mathfrak{F}_{n}=\Phi_{n}\circ H_{n}$ for a given pair
of function sequences $\{\mathfrak{F}_{n}\}$ and $\{\Phi_{n}\}.$ Then we say
that $\mathfrak{F}_{n}$ is \textit{semiconjugate} to $\Phi_{n}$ for each $n$
and that the sequence $\{H_{n}\}$ is a \textit{form symmetry} of (\ref{dek}).
Since $m<k+1,$ the form symmetry $\{H_{n}\}$ is \textit{order-reducing}.

We state the next core result from \cite{fsor} as a lemma here.

\begin{lemma}
\label{scf}(A semiconjugate factorization theorem) Let $\mathcal{G}$ be a
nontrivial group\textit{ and let }$k\geq1$\textit{, }$1\leq m\leq k$\textit{
be integers. If }$h_{n}:\mathcal{G}^{k-m+1}\rightarrow\mathcal{G}$\textit{\ is
a sequence of functions }and the functions $H_{n}:\mathcal{G}^{k+1}%
\rightarrow\mathcal{G}^{m}$ are defined by\textit{\ }%

\[
H_{n}(u_{0},\ldots,u_{k})=[u_{0}\ast h_{n}(u_{1},\ldots,u_{k+1-m}%
),\ldots,u_{m-1}\ast h_{n-m+1}(u_{m},\ldots,u_{k})]
\]
where $\ast$ denotes the group operation in $\mathcal{G}$ then the following
statements are true:\textit{\ }

(a) The function $H_{n}$ is surjective for every $n\geq0$.

(b) \textit{If }$\{H_{n}\}$\textit{\ is an order-reducing form symmetry then
the difference equation (\ref{dek})\ is equivalent to the system of equations
}%
\begin{align}
t_{n+1}  &  =\phi_{n}(t_{n},\ldots,t_{n-m+1}),\label{tdf}\\
x_{n+1}  &  =t_{n+1}\ast h_{n+1}(x_{n},\ldots,x_{n-k+m})^{-1} \label{tdcf}%
\end{align}

\textit{whose orders }$m$\textit{\ and }$k+1-m$\textit{\ respectively, add up
to the order of (\ref{dek}). }

(c) The map $\Phi_{n}:\mathcal{G}^{m}\rightarrow\mathcal{G}^{m}$ is the
standard unfolding of Eq.(\ref{tdf}) for each $n\geq0$.
\end{lemma}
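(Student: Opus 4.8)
The plan is to verify the three claims directly from the semiconjugate relation $H_{n+1}\circ\mathfrak{F}_n = \Phi_n\circ H_n$, treating $H_n$ as given by the stated formula. For part (a), I would show surjectivity of $H_n$ by exhibiting a right inverse: given a target $(v_0,\ldots,v_{m-1})\in\mathcal{G}^m$, I would solve the system $u_j \ast h_{n-j+1}(u_{j+1},\ldots,u_{j+k+1-m}) = v_j$ for $j=0,\ldots,m-1$ by a backward cascade. Concretely, choose the ``free'' coordinates $u_m,\ldots,u_k$ arbitrarily (say all equal to the identity), which determines $h_{n-m+1}(u_m,\ldots,u_k)$, hence $u_{m-1} = v_{m-1}\ast h_{n-m+1}(u_m,\ldots,u_k)^{-1}$; then $u_{m-1}$ together with the already-chosen coordinates feeds into the next $h$, giving $u_{m-2}$, and so on down to $u_0$. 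Since $\mathcal{G}$ is a group every required inversion exists, so $H_n$ is onto for every $n$. This is the step I expect to be the real content; the rest is bookkeeping.

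For part (b), assuming $\{H_n\}$ is an order-reducing form symmetry, there is by definition a sequence $\Phi_n:\mathcal{G}^m\to\mathcal{G}^m$ with $H_{n+1}\circ\mathfrak{F}_n=\Phi_n\circ H_n$. I would introduce the new variable $t_n$ as the first coordinate of $H_n$ evaluated along a solution, i.e. $t_n = x_n \ast h_n(x_{n-1},\ldots,x_{n-k+m-1})$; note the remaining coordinates of $H_n(X_n)$ are just shifted copies, $t_{n-1},\ldots,t_{n-m+1}$, because the $j$-th slot of the formula is $u_j \ast h_{n-j+1}(\ldots)$, which at the unfolded state $X_n=(x_n,\ldots,x_{n-k})$ reads $x_{n-j}\ast h_{n-j}(x_{n-j-1},\ldots)= t_{n-j}$. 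Reading off the first coordinate of the semiconjugate relation then gives $t_{n+1}=\phi_n(t_n,\ldots,t_{n-m+1})$ where $\phi_n$ is the first component of $\Phi_n$; this is equation (\ref{tdf}). Equation (\ref{tdcf}) is obtained by simply inverting the definition of $t_{n+1}$: from $t_{n+1}=x_{n+1}\ast h_{n+1}(x_n,\ldots,x_{n-k+m})$ we solve $x_{n+1}=t_{n+1}\ast h_{n+1}(x_n,\ldots,x_{n-k+m})^{-1}$, using that $\mathcal{G}$ is a group. Conversely, a solution of the system (\ref{tdf})--(\ref{tdcf}) reassembles into a solution of (\ref{dek}): substituting (\ref{tdcf}) into (\ref{tdf}) and using the semiconjugate relation recovers $x_{n+1}=f_n(x_n,\ldots,x_{n-k})$. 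I would remark that the orders $m$ and $k+1-m$ visibly sum to $k+1$, and that the initial data correspond bijectively under $H_0$ together with the free coordinates used in part (a).

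For part (c), I would observe that by construction $\Phi_n$ sends $(t_n,\ldots,t_{n-m+1})$ to $(t_{n+1},\ldots,t_{n-m+2})$, whose first coordinate is $\phi_n(t_n,\ldots,t_{n-m+1})$ and whose remaining coordinates are the identity shift; this is exactly the standard unfolding of the order-$m$ recurrence (\ref{tdf}). The one subtlety worth flagging is consistency: one must check that the lower $m-1$ components of the semiconjugate relation are automatically satisfied (they encode only the trivial shift identities $t_{n+1-i}=t_{n+1-i}$), so that $\Phi_n$ genuinely has the unfolded form rather than merely agreeing with it on the image of $H_n$. Because $H_n$ is surjective by part (a), agreement on the image is agreement everywhere, which closes this gap. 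The main obstacle overall is keeping the index shifts in the definition of $H_n$ aligned correctly when passing between the unfolded state $X_n$ and the scalar variables; once the identity ``$j$-th coordinate of $H_n(X_n)$ equals $t_{n-j}$'' is established, everything else follows mechanically.
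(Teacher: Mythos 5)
Your argument is correct, and it follows the standard route: backward elimination of the free coordinates $u_k,\ldots,u_m$ for surjectivity, the identification of the $j$-th coordinate of $H_n(X_n)$ with $t_{n-j}$ to read off the factor and cofactor equations, and surjectivity of $H_n$ to pin down $\Phi_n$ as the unfolding of the scalar factor equation off the image. Note that the paper itself does not prove this lemma --- it is quoted from \cite{fsor} --- so there is no in-paper proof to compare against; your write-up supplies exactly the argument that reference gives. One cosmetic slip: the $j$-th coordinate of $H_n$ carries $h_{n-j}$, not $h_{n-j+1}$ (your indexing would give $h_{n+1}$ in the first slot), but this does not affect the cascade or any later step.
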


\begin{remark}
\label{scj}Part (c) above permits us to stay within the context of higher
order difference equations. Being able to work within this context is
especially beneficial in the case of \textbf{nonrecursive} equations
(including some linear ones) which do not in general unfold to maps on modules
over rings (or vector spaces over fields) and therefore, their solutions are
not determined via group actions; see the final section of this paper for an
illustrative example. Extensions of the method of this paper to nonrecursive
equations no longer rely on semiconjugacy but they do retain the basic
concepts of form symmetry and factor-cofactor pairs; see \cite{fsor}, Chapter 8.
\end{remark}

\begin{definition}
The pair of equations (\ref{tdf}) and (\ref{tdcf}) constitute the
\textbf{semiconjugate factorization}, or \textbf{sc-factorization} of
(\ref{dek}). This pair of equations is a triangular system (see \cite{sj})
since (\ref{tdf}) is independent of (\ref{tdcf}). We call (\ref{tdf}) the
\textbf{factor }equation of (\ref{dek}) and (\ref{tdcf}) its \textbf{cofactor
}equation\textit{. }
\end{definition}

Note that (\ref{tdf}) has order $m$ and (\ref{tdcf}) has order $k+1-m.$
Consider the following special case of $H_{n}$ in Lemma \ref{scf} with $m=k$
\begin{equation}
H_{n}(u_{0},u_{1},\ldots,u_{k})=[u_{0}\ast h_{n}(u_{1}),u_{1}\ast
h_{n-1}(u_{2}),\ldots,u_{k-1}\ast h_{n-k+1}(u_{k})] \label{fsk1}%
\end{equation}
where $h_{n}:\mathcal{G}\rightarrow\mathcal{G}$ being a given sequence of
maps. The semiconjugate factorization of (\ref{dek}) in this case is%
\begin{align}
t_{n+1}  &  =\phi_{n}(t_{n},\ldots,t_{n-k+1}),\label{tdf1}\\
x_{n+1}  &  =t_{n+1}\ast h_{n+1}(x_{n})^{-1} \label{tdcf1}%
\end{align}
in which the factor equation has order $k$ and the cofactor equation has order 1.

The next result gives a necessary and sufficient condition for the existence
of a form symmetry of type (\ref{fsk1}); see \cite{fsor} for the proof.

\begin{lemma}
(Invertible-map criterion) Let $\mathcal{G}$ be a nontrivial group\textit{ and
}assume that $h_{n}:\mathcal{G}\rightarrow\mathcal{G}$ is a sequence of
bijections. For arbitrary elements $u_{0},v_{1},\ldots,v_{k}\in\mathcal{G}$
and every $n\geq0$ define $\zeta_{0,n}(u_{0})\equiv u_{0}$ and for
$j=1,\ldots,k$ define%
\begin{equation}
\zeta_{j,n}(u_{0},v_{1},\ldots,v_{j})=h_{n-j+1}^{-1}(\zeta_{j-1,n}(u_{0}%
,v_{1},\ldots,v_{j-1})^{-1}\ast v_{j}). \label{hzetajn}%
\end{equation}

Then (\ref{dek}) has the form symmetry (\ref{fsk1}) if and only if the
quantity
\begin{equation}
f_{n}(\zeta_{0,n},\zeta_{1,n}(u_{0},v_{1}),\ldots,\zeta_{k,n}(u_{0}%
,v_{1},\ldots,v_{k}))\ast h_{n+1}(u_{0}) \label{tdhinvcrit}%
\end{equation}

is independent of $u_{0}$ for every $n\geq0$. In this case (\ref{dek}) has a
semiconjugate factorization into (\ref{tdf1}) and (\ref{tdcf1}) where the
factor functions in (\ref{tdf1}) are given by%
\begin{equation}
\phi_{n}(v_{1},\ldots,v_{k})=f_{n}(\zeta_{0,n},\zeta_{1,n}(u_{0},v_{1}%
),\ldots,\zeta_{k,n}(u_{0},v_{1},\ldots,v_{k}))\ast h_{n+1}(u_{0}).
\label{tdinvf}%
\end{equation}

\end{lemma}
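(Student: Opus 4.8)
The plan is to work directly with the semiconjugate relation $H_{n+1}\circ\mathfrak{F}_n=\Phi_n\circ H_n$, where $\mathfrak{F}_n(u_0,\dots,u_k)=(f_n(u_0,\dots,u_k),\,u_0,\dots,u_{k-1})$ is the standard unfolding of (\ref{dek}) and $H_n$ is the specific map (\ref{fsk1}). First I would compute $H_{n+1}\circ\mathfrak{F}_n$ coordinatewise. Writing $[w_1,\dots,w_k]=H_n(u_0,\dots,u_k)$, so that $w_i=u_{i-1}\ast h_{n-i+1}(u_i)$ for $1\le i\le k$, a short index check gives
\[
H_{n+1}(\mathfrak{F}_n(u_0,\dots,u_k))=[\,f_n(u_0,\dots,u_k)\ast h_{n+1}(u_0),\,w_1,\dots,w_{k-1}\,];
\]
that is, its last $k-1$ entries coincide with the first $k-1$ entries of $H_n(u_0,\dots,u_k)$. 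Since $H_n$ is onto $\mathcal{G}^k$ (Lemma~\ref{scf}(a)), this forces any $\Phi_n$ satisfying the semiconjugate relation to be the standard unfolding of an order-$k$ recurrence $t_{n+1}=\phi_n(t_n,\dots,t_{n-k+1})$, in agreement with Lemma~\ref{scf}(c), and collapses the whole relation to the single scalar condition that the top entry $f_n(u_0,\dots,u_k)\ast h_{n+1}(u_0)$ be constant on each fiber $H_n^{-1}(w_1,\dots,w_k)$.

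Next I would describe those fibers. Solving the relations $w_i=u_{i-1}\ast h_{n-i+1}(u_i)$ successively for $u_1,u_2,\dots,u_k$ gives $u_j=h_{n-j+1}^{-1}(u_{j-1}^{-1}\ast w_j)$, which, compared with the recursion (\ref{hzetajn}), is precisely the identity $u_j=\zeta_{j,n}(u_0,w_1,\dots,w_j)$; I would prove it by induction on $j$, the inductive step using that each $h_{n-j+1}$ is a bijection (this is where the hypothesis on the maps $h_n$ is used). Hence the fiber of $H_n$ over $(w_1,\dots,w_k)$ is the one-parameter family $u_0\mapsto(\zeta_{0,n}(u_0),\zeta_{1,n}(u_0,w_1),\dots,\zeta_{k,n}(u_0,w_1,\dots,w_k))$, and substituting this into the top entry turns the scalar condition of the previous paragraph into the requirement that the quantity (\ref{tdhinvcrit}) (with $v_i=w_i$) be independent of $u_0$ --- exactly the stated criterion. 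When it holds, the common value of (\ref{tdhinvcrit}) over all $u_0$ defines $\phi_n$, which is formula (\ref{tdinvf}), and $\{H_n\}$ is an order-reducing form symmetry; the semiconjugate factorization into (\ref{tdf1}) and (\ref{tdcf1}) then follows from Lemma~\ref{scf}(b),(c) specialized to $m=k$, the cofactor equation (\ref{tdcf1}) being the rearrangement $x_{n+1}=t_{n+1}\ast h_{n+1}(x_n)^{-1}$ of $t_{n+1}=x_{n+1}\ast h_{n+1}(x_n)$ read off from the first coordinate of $H_{n+1}$.

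The coordinatewise computation of $H_{n+1}\circ\mathfrak{F}_n$ and the induction establishing the $\zeta$-formula are routine; the step that needs real care is the equivalence between ``the top entry descends to a well-defined function of $(w_1,\dots,w_k)$'' and ``independence of $u_0$'', which I would isolate as a small claim about the fibers of $H_n$ and which genuinely relies on the $h_n$ being bijections. A secondary point worth spelling out is that the index shift $h_{n-j+1}^{-1}$ at stage $j$ in (\ref{hzetajn}) matches the index $h_{n-i+1}$ occurring in $H_n$, so that the maps $\zeta_{j,n}$ invert the particular $H_n$ under consideration rather than a reindexed copy of it; keeping this alignment straight is where an error is most likely to creep in.
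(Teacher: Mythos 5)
Your proof is correct. The paper does not actually include a proof of this lemma (it defers to \cite{fsor}), but your argument is the standard one and every step checks out: the coordinatewise computation showing $H_{n+1}\circ\mathfrak{F}_{n}=[\,f_{n}\ast h_{n+1}(u_{0}),w_{1},\ldots,w_{k-1}\,]$, the identification of each fiber of $H_{n}$ as the one-parameter family $u_{0}\mapsto(\zeta_{0,n},\zeta_{1,n}(u_{0},w_{1}),\ldots,\zeta_{k,n}(u_{0},w_{1},\ldots,w_{k}))$ obtained by inverting the relations $w_{i}=u_{i-1}\ast h_{n-i+1}(u_{i})$ (which is exactly where bijectivity of the $h_{n}$ enters), and the resulting equivalence between well-definedness of $\Phi_{n}$ on $\mathcal{G}^{k}$ and independence of (\ref{tdhinvcrit}) from $u_{0}$, with the index alignment between $h_{n-j+1}^{-1}$ in (\ref{hzetajn}) and the entries of (\ref{fsk1}) verified.
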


In the context of rings, the group $\mathcal{G}$ in the preceding result is
the additive group of the ring so that $\ast$ denotes addition and thus
(\ref{hzetajn}), (\ref{tdhinvcrit}) and (\ref{tdcf1}) read, respectively, as
follows%
\begin{gather*}
\zeta_{j,n}(u_{0},v_{1},\ldots,v_{j})=h_{n-j+1}^{-1}(v_{j}-\zeta_{j-1,n}%
(u_{0},v_{1},\ldots,v_{j-1})),\\
f_{n}(\zeta_{0,n},\zeta_{1,n}(u_{0},v_{1}),\ldots,\zeta_{k,n}(u_{0}%
,v_{1},\ldots,v_{k}))+h_{n+1}(u_{0}),\text{ and:}\\
x_{n+1}=t_{n+1}-h_{n+1}(x_{n}).
\end{gather*}

A basic class of maps $h_{n}$ in rings is defined next.

\begin{definition}
\label{fsl}Let $R$ be a ring and let $\{\alpha_{n}\}$ be a sequence in $R$
such that $\alpha_{n}\not =0$ for all $n.$ A\textit{ }\textbf{linear form
symmetry} is defined as the special case of (\ref{fsk1}) with $h_{n}%
(u)=-\alpha_{n}u;$ i.e.,
\begin{equation}
\lbrack u_{0}-\alpha_{n}u_{1},u_{1}-\alpha_{n-1}u_{2},\ldots,u_{k-1}%
-\alpha_{n-k+1}u_{k}] \label{lfs}%
\end{equation}

\end{definition}

If $\alpha$ is \textit{not} a zero divisor then $h(u)=-\alpha u$ is one-to-one
or injective since for every $u,v\in R$
\[
h(u)=h(v)\Rightarrow-\alpha(u-v)=0\Rightarrow u-v=0.
\]
In general, $h$ is not surjective even if $R$ contains no zero divisors
(consider $\alpha\in\mathbb{Z}$, $\alpha\not =\pm1$). But if $R$ has an
identity and each $\alpha$ is a unit then each $h$ is a bijection with inverse
$h^{-1}(u)=-\alpha^{-1}u.$

\begin{remark}
Many \textit{nonlinear} difference equations possess the linear form symmetry
(\ref{lfs}); see \cite{sedb} and \cite{fsor}.
\end{remark}

\section{SC-factorization in rings\label{scfr}}

Assume that the underlying ring $R$ of (\ref{Lde}) has a (multiplicative)
identity denoted by 1. For such a ring, the set of all units (elements having
multiplicative inverses, or reciprocals) is a group, namely the \textit{unit
group}, that we denote by $G.$

A \textit{unitary sequence} is any sequence in $G.$ If $\{u_{n}\}$ is a
unitary sequence then the sequence $\{u_{n+1}u_{n}^{-1}\}$ of \textit{right
ratios} of $\{u_{n}\}$ is well-defined and unitary. Similarly, the sequence
$\{u_{n}^{-1}u_{n+1}\}$ of \textit{left ratios }is well-defined and unitary.
If $R$ is commutative then the sequences $\{u_{n+1}u_{n}^{-1}\}$ and
$\{u_{n}^{-1}u_{n+1}\}$ are the same, representing the\textit{ ratios
sequence} of $\{u_{n}\}.$

Call two sequences $\{x_{n}\}$ and $\{y_{n}\}$ in $R$ \textit{right
equivalent} (or \textit{left equivalent}) if there is a unit $u$ such that
$y_{n}=x_{n}u$ (or $y_{n}=ux_{n}$) for all $n$. These two relations on the set
of sequences in $R$ are indeed equivalence relations, and if $\{x_{n}\}$ is
unitary then so are $\{x_{n}u\}$ and $\{ux_{n}\}$.

The next result has the same flavor as the result in calculus which states
that differentiable functions having the same derivative are equal up to a constant.

\begin{lemma}
\label{rls}Let $R$ have an identity and $\{x_{n}\}$ and $\{y_{n}\}$ be unitary
sequences. Then $\{x_{n}\}$ and $\{y_{n}\}$ are right (or left) equivalent if
and only if their sequences of right (or left) ratios are equal.
\end{lemma}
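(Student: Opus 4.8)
The plan is to prove both directions of the equivalence directly, treating the "right" case in detail and noting that the "left" case is entirely symmetric. Throughout, recall that $\{x_n\}$ and $\{y_n\}$ are unitary, so every $x_n$ and $y_n$ lies in the unit group $G$ and all the inverses below are well-defined; the right ratios are $r_n = x_{n+1}x_n^{-1}$ and $s_n = y_{n+1}y_n^{-1}$.

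First I would do the easy direction: suppose $\{x_n\}$ and $\{y_n\}$ are right equivalent, so $y_n = x_n u$ for some fixed unit $u \in G$ and all $n$. Then
\[
s_n = y_{n+1}y_n^{-1} = (x_{n+1}u)(x_n u)^{-1} = x_{n+1}u\,u^{-1}x_n^{-1} = x_{n+1}x_n^{-1} = r_n,
\]
so the right-ratio sequences coincide. The key point is that the constant unit $u$ cancels in the ratio because it sits on the right of both $x_{n+1}$ and $x_n$.

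For the converse, suppose $r_n = s_n$ for all $n \geq 0$; I want to produce a single unit $u$ with $y_n = x_n u$ for all $n$. The natural candidate is $u = x_n^{-1}y_n$, and the whole content of the proof is that this expression does not depend on $n$. I would set $u_n = x_n^{-1}y_n$ (a unit, being a product of units) and show $u_{n+1} = u_n$ for every $n$ by a one-line computation: using $x_{n+1} = r_n x_n$ and $y_{n+1} = s_n y_n = r_n y_n$ (here is where the hypothesis $r_n = s_n$ enters),
\[
u_{n+1} = x_{n+1}^{-1}y_{n+1} = (r_n x_n)^{-1}(r_n y_n) = x_n^{-1} r_n^{-1} r_n y_n = x_n^{-1} y_n = u_n.
\]
Hence $u_n$ is a constant unit $u$, and $y_n = x_n u$ for all $n$, which is precisely right equivalence. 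The left case is proved identically with the roles of left and right multiplication interchanged: right equivalence $y_n = u x_n$ gives $y_n^{-1}y_{n+1} = x_n^{-1} u^{-1} u x_{n+1} = x_n^{-1}x_{n+1}$ for one direction, and for the converse one tracks the constancy of $y_n x_n^{-1}$ using left ratios.

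I do not anticipate a genuine obstacle here; this is a clean bookkeeping argument and the only things to be careful about are (i) keeping the multiplication order straight in a possibly noncommutative ring — the "right" ratios must be paired with "right" equivalence, never mixed — and (ii) noting explicitly that unitarity of the sequences is exactly what licenses every inverse that appears, so the statement genuinely needs that hypothesis. If I wanted to streamline, I could phrase the converse as: the hypothesis says $x_{n+1}x_n^{-1} = y_{n+1}y_n^{-1}$, i.e. $x_{n+1}^{-1}y_{n+1} = x_n^{-1}y_n$, so the sequence $\{x_n^{-1}y_n\}$ is constant — but spelling out the one displayed computation above makes the role of the ratios transparent.
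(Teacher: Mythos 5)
Your proof is correct and follows essentially the same route as the paper: the forward direction is the identical cancellation computation, and your converse (showing $u_n = x_n^{-1}y_n$ is constant) is just a cleaner packaging of the paper's induction starting from $u = x_0^{-1}y_0$. The only blemish is a slip of terminology near the end where you call $y_n = u x_n$ ``right equivalence''---by the paper's convention that is \emph{left} equivalence---but your computation there is the correct left-handed one.
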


\begin{proof}
Suppose that $\{x_{n}\}$ and $\{y_{n}\}$ are right equivalent. Then
$y_{n}=x_{n}u$ for some unit $u$ and all $n$ so that%
\[
y_{n+1}y_{n}^{-1}=x_{n+1}u(x_{n}u)^{-1}=x_{n+1}u(u^{-1}x_{n}^{-1}%
)=x_{n+1}x_{n}^{-1},
\]
i.e., the sequences of right ratios are the same. Conversely, suppose that
$y_{n+1}y_{n}^{-1}=x_{n+1}x_{n}^{-1}$ for all $n\geq0$ and define
$u=x_{0}^{-1}y_{0}.$ Then
\[
x_{1}u=x_{1}x_{0}^{-1}y_{0}=y_{1}y_{0}^{-1}y_{0}=y_{1}.
\]

This equality also implies that $u=x_{1}^{-1}y_{1}$ so the preceding argument
may be repeated to show that $x_{n}u=y_{n}$ for all $n\geq0.$ A similar
argument proves the left-handed case.
\end{proof}

Recall that the existence of a linear form symmetry (\ref{lfs}) implies that
(\ref{dek}) has a semiconjugate factorization with a first-order, linear
non-homogeneous cofactor equation%
\begin{equation}
x_{n+1}=t_{n+1}+\alpha_{n+1}x_{n}. \label{tdfslc}%
\end{equation}

The following necessary and sufficient condition for the existence of a linear
form symmetry is a consequence of the invertible-map criterion with
$\mathcal{G}$ being the additive group of the ring.

\begin{lemma}
\label{tdlfs} Let $R$ be a ring with identity. Equation (\ref{dek}) has the
linear form symmetry (\ref{lfs}) if and only if there is a unitary sequence
$\{\alpha_{n}\}$ in $R$ such that the quantity%
\begin{equation}
f_{n}(u_{0},\zeta_{1,n}(u_{0},v_{1}),\ldots,\zeta_{k,n}(u_{0},v_{1}%
,\ldots,v_{k}))-\alpha_{n+1}u_{0} \label{tdlfscr}%
\end{equation}
is independent of $u_{0}$ for all $n$, where for $j=1,\ldots,k$,
\begin{equation}
\zeta_{j,n}(u_{0},v_{1},\ldots,v_{j})=\left(  \prod_{i=0}^{j-1}\alpha
_{n-i}\right)  ^{-1}u_{0}-\sum_{i=1}^{j}\left(
{\displaystyle\prod_{m=i}^{j}}
\alpha_{n-m+1}\right)  ^{-1}v_{i}. \label{zen}%
\end{equation}

\end{lemma}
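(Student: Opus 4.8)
The plan is to deduce this lemma directly from the Invertible-map criterion by specializing the bijections $h_{n}$ to the linear maps $h_{n}(u)=-\alpha_{n}u$. First I would observe that, because $R$ has an identity and each $\alpha_{n}$ is a unit, every such $h_{n}$ is a bijection of the additive group of $R$ with inverse $h_{n}^{-1}(u)=-\alpha_{n}^{-1}u$ (this is the remark following Definition \ref{fsl}); hence the Invertible-map criterion applies with $\mathcal{G}$ taken to be the additive group of $R$. Under this choice the form symmetry (\ref{fsk1}) is precisely the linear form symmetry (\ref{lfs}), and the first-order cofactor equation (\ref{tdcf1}) becomes the linear non-homogeneous equation (\ref{tdfslc}).

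The only computation required is to verify that the abstract recursion (\ref{hzetajn}), which in additive notation reads $\zeta_{j,n}(u_{0},v_{1},\ldots,v_{j})=h_{n-j+1}^{-1}(v_{j}-\zeta_{j-1,n})=-\alpha_{n-j+1}^{-1}v_{j}+\alpha_{n-j+1}^{-1}\zeta_{j-1,n}$ with $\zeta_{0,n}=u_{0}$, unwinds into the closed form (\ref{zen}). I would do this by induction on $j$. The base case $j=1$ gives $\zeta_{1,n}=\alpha_{n}^{-1}u_{0}-\alpha_{n}^{-1}v_{1}$, which is (\ref{zen}) for $j=1$. For the inductive step, substituting the formula for $\zeta_{j-1,n}$ into the recursion and using the identities $\alpha_{n-j+1}^{-1}\big(\prod_{i=0}^{j-2}\alpha_{n-i}\big)^{-1}=\big(\prod_{i=0}^{j-1}\alpha_{n-i}\big)^{-1}$ and $\alpha_{n-j+1}^{-1}\big(\prod_{m=i}^{j-1}\alpha_{n-m+1}\big)^{-1}=\big(\prod_{m=i}^{j}\alpha_{n-m+1}\big)^{-1}$ reproduces (\ref{zen}) with index $j$.

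Once (\ref{zen}) is in hand, the quantity (\ref{tdhinvcrit}) of the Invertible-map criterion, namely $f_{n}(\zeta_{0,n},\zeta_{1,n},\ldots,\zeta_{k,n})\ast h_{n+1}(u_{0})$, becomes exactly $f_{n}(u_{0},\zeta_{1,n}(u_{0},v_{1}),\ldots,\zeta_{k,n}(u_{0},v_{1},\ldots,v_{k}))-\alpha_{n+1}u_{0}$, i.e. (\ref{tdlfscr}), since $\zeta_{0,n}=u_{0}$ and $h_{n+1}(u_{0})=-\alpha_{n+1}u_{0}$. The criterion then asserts that (\ref{dek}) admits the form symmetry (\ref{fsk1}) — which here is (\ref{lfs}) — if and only if this quantity is independent of $u_{0}$, which yields both directions of the lemma simultaneously; the factor function (\ref{tdinvf}) specializes to the factor equation of the resulting sc-factorization.

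The main point to be careful about is the bookkeeping with the non-commutative products of the $\alpha_{n}$: the order of the factors in $\prod_{i=0}^{j-1}\alpha_{n-i}$ and $\prod_{m=i}^{j}\alpha_{n-m+1}$ must be respected, and when passing from index $j-1$ to $j$ the new factor $\alpha_{n-j+1}$ must be attached on the correct side so that the inverses combine as above. Apart from this indexing discipline there is no genuine obstacle; the lemma is a routine corollary of the Invertible-map criterion.
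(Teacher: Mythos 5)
Your proposal is correct and follows essentially the same route as the paper: specialize the invertible-map criterion to $h_{n}(u)=-\alpha_{n}u$ (a bijection of the additive group since each $\alpha_{n}$ is a unit), and verify the closed form (\ref{zen}) by induction on $j$ from the recursion $\zeta_{j,n}=\alpha_{n-j+1}^{-1}(\zeta_{j-1,n}-v_{j})$, with the base case $\zeta_{1,n}=\alpha_{n}^{-1}u_{0}-\alpha_{n}^{-1}v_{1}$ and the same product identities in the inductive step. Your explicit attention to the order of the factors $\alpha_{n-i}$ in the noncommutative setting is a point the paper leaves implicit, but the argument is the same.
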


\begin{proof}
Define $h_{n}(u)=-\alpha_{n}u$ for each $n$ so that $h_{n}^{-1}(u)=-\alpha
_{n}^{-1}u$ for all $n.$ If we define $\zeta_{j,0}=u_{0}$ and for
$j=1,\ldots,k$ set
\[
\zeta_{j,n}(u_{0},v_{1},\ldots,v_{j})=\alpha_{n-j+1}^{-1}[\zeta_{j-1,n}%
(u_{0},v_{1},\ldots,v_{j-1})-v_{j}].
\]
recursively then the first assertion of the lemma is true by the
invertible-map criterion. To prove (\ref{zen}), observe that%
\[
\zeta_{1,n}(u_{0},v_{1})=\alpha_{n}^{-1}(u_{0}-v_{1})=\alpha_{n}^{-1}%
u_{0}-\alpha_{n}^{-1}v_{1}%
\]
which proves (\ref{zen}) if $j=1$. Suppose that (\ref{zen}) is true for
indices less than $j$ where $j\leq k.$ Then
%

\begin{align*}
\zeta_{j,n}(u_{0},v_{1},\ldots,v_{j})  &  =\alpha_{n-j+1}^{-1}[\zeta
_{j-1,n}(u_{0},v_{1},\ldots,v_{j-1})-v_{j}]\\
&  =\alpha_{n-j+1}^{-1}\left[  \left(  \prod_{i=0}^{j-2}\alpha_{n-i}\right)
^{-1}u_{0}-\sum_{i=1}^{j-1}\left(
{\displaystyle\prod_{m=i}^{j-1}}
\alpha_{n-m+1}\right)  ^{-1}v_{i}-v_{j}\right] \\
&  =\left(  \prod_{i=0}^{j-1}\alpha_{n-i}\right)  ^{-1}u_{0}-\sum_{i=1}%
^{j-1}\left(
{\displaystyle\prod_{m=i}^{j}}
\alpha_{n-m+1}\right)  ^{-1}v_{i}-\alpha_{n-j+1}^{-1}v_{j}\\
&  =\left(  \prod_{i=0}^{j-1}\alpha_{n-i}\right)  ^{-1}u_{0}-\sum_{i=1}%
^{j}\left(
{\displaystyle\prod_{m=i}^{j}}
\alpha_{n-m+1}\right)  ^{-1}v_{i}%
\end{align*}
and the proof is complete.
\end{proof}

The following application of Lemma \ref{tdlfs} gives a sc-factorization
theorem for linear difference equations in a ring with identity; also see the
comments following the theorem.

\begin{theorem}
\label{sf1}Let $R$ be a ring with identity. The linear equation (\ref{Lde})
has the linear form symmetry (\ref{lfs}) with unit coefficients if there is a
unitary sequence $\{\alpha_{n}\}$ that satisfies the relation
\begin{equation}
\alpha_{n+1}=a_{0,n}+\sum_{j=1}^{k}a_{j,n}\left(  \prod_{i=0}^{j-1}%
\alpha_{n-i}\right)  ^{-1}. \label{ricn}%
\end{equation}

The corresponding semiconjugate factorization of (\ref{Lde}) is%
\begin{align}
t_{n+1}  &  =a_{0,n}^{\prime}t_{n}+a_{1,n}^{\prime}t_{n-1}+\cdots
+a_{k-1,n}^{\prime}t_{n-k+1}+b_{n}\label{genlinf}\\
x_{n+1}  &  =\alpha_{n+1}x_{n}+t_{n+1} \label{genlincf}%
\end{align}

where for $m=0,\ldots,k-1$, $t_{m+1}=x_{m+1}-\alpha_{m+1}x_{m}$ and
\[
a_{m,n}^{\prime}=-\sum_{i=m+1}^{k}a_{i,n}\left(  \prod_{j=m+1}^{i}%
\alpha_{n-j+1}\right)  ^{-1}.
\]

\end{theorem}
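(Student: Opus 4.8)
The plan is to obtain the theorem as a direct specialization of the invertible-map criterion as packaged in Lemma \ref{tdlfs}. First I would note that equation (\ref{Lde}) is the recursive equation (\ref{dek}) with $f_{n}(u_{0},u_{1},\ldots,u_{k})=a_{0,n}u_{0}+a_{1,n}u_{1}+\cdots+a_{k,n}u_{k}+b_{n}$, so that Lemma \ref{tdlfs} applies verbatim once a unitary sequence $\{\alpha_{n}\}$ is chosen: the linear form symmetry (\ref{lfs}) exists iff the expression
\[
f_{n}(u_{0},\zeta_{1,n}(u_{0},v_{1}),\ldots,\zeta_{k,n}(u_{0},v_{1},\ldots,v_{k}))-\alpha_{n+1}u_{0}
\]
is independent of $u_{0}$, with $\zeta_{j,n}$ given by the closed form (\ref{zen}). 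Substituting the linear $f_{n}$ and collecting the coefficient of $u_{0}$, I would use that in (\ref{zen}) the $u_{0}$-part of $\zeta_{j,n}$ is exactly $\left(\prod_{i=0}^{j-1}\alpha_{n-i}\right)^{-1}u_{0}$, so the total coefficient of $u_{0}$ in the displayed expression is
\[
a_{0,n}+\sum_{j=1}^{k}a_{j,n}\left(\prod_{i=0}^{j-1}\alpha_{n-i}\right)^{-1}-\alpha_{n+1}.
\]
Setting this equal to $0$ is precisely the Riccati-type relation (\ref{ricn}); this establishes the first assertion of the theorem.

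For the second part I would compute the factor functions $\phi_{n}$ from formula (\ref{tdinvf}) (in additive form): $\phi_{n}(v_{1},\ldots,v_{k})$ equals the $u_{0}$-independent value of the same expression, which I would evaluate by collecting the coefficient of each $v_{i}$. From (\ref{zen}), $v_{i}$ appears in $\zeta_{j,n}$ (for $j\ge i$) with coefficient $-\left(\prod_{m=i}^{j}\alpha_{n-m+1}\right)^{-1}$, so the coefficient of $v_{i}$ in $f_{n}(\ldots)$ is $-\sum_{j=i}^{k}a_{j,n}\left(\prod_{m=i}^{j}\alpha_{n-m+1}\right)^{-1}$, which (reindexing $j\mapsto i$ and the summation variable appropriately, and shifting the outer index by one to match the $t$-indexing) is exactly $a_{m,n}^{\prime}$ as stated. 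The additive constant $b_{n}$ passes through unchanged, giving the factor equation (\ref{genlinf}). The cofactor equation (\ref{genlincf}) and the substitution $t_{m+1}=x_{m+1}-\alpha_{m+1}x_{m}$ are then read off directly from (\ref{tdfslc}) and the definition of the form symmetry (\ref{lfs}).

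The main obstacle I anticipate is purely bookkeeping: matching the index conventions between the abstract $\zeta_{j,n}$-formalism (where products run $\prod_{m=i}^{j}\alpha_{n-m+1}$) and the theorem's stated coefficients $a_{m,n}^{\prime}$ (which run over $m=0,\ldots,k-1$ with products $\prod_{j=m+1}^{i}\alpha_{n-j+1}$). I would be careful that the outer index shift by one — coming from the fact that the cofactor produces $t_{n+1}$ from $x_{n}$ while the factor equation is indexed so its leading term is $t_{n+1}$ — is applied consistently, and that no commutativity of $R$ is silently assumed: all reciprocals of products must be written in the correct order, $\left(\prod_{i=0}^{j-1}\alpha_{n-i}\right)^{-1}=\alpha_{n-j+1}^{-1}\cdots\alpha_{n}^{-1}$, matching the recursive definition of $\zeta_{j,n}$ in the proof of Lemma \ref{tdlfs}. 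Once the indices are aligned, every step is a routine substitution and the theorem follows.
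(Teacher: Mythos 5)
Your proposal is correct and follows essentially the same route as the paper's own proof: specialize Lemma \ref{tdlfs} to the linear $f_{n}$, read off (\ref{ricn}) as the vanishing of the $u_{0}$-coefficient using the closed form (\ref{zen}), and obtain the factor coefficients $a_{m,n}^{\prime}$ by collecting the $v_{i}$-coefficients and performing the one-unit index shift. The bookkeeping concerns you flag (order of the inverted products in a noncommutative ring, alignment of the $t$-indexing) are exactly the points the paper's computation handles.
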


\begin{proof}
By Lemma \ref{tdlfs} it is only necessary to determine a unitary sequence
$\{\alpha_{n}\}$ in $R$ such that for each $n$ (\ref{tdlfscr}) is independent
of $u_{0}$ for the following functions%
\[
f_{n}(u_{0},\ldots,u_{k})=a_{0,n}u_{0}+a_{1,n}u_{1}+\cdots+a_{k,n}u_{k}%
+b_{n}.
\]

For arbitrary $u_{0},v_{1},\ldots,v_{k}\in R$ and $j=1,\ldots,k$ define
$\zeta_{j,n}(u_{0},v_{1},\ldots,v_{j})$ as in Lemma \ref{tdlfs}. Then
expression (\ref{tdlfscr}) is\smallskip%

\begin{gather}
-\alpha_{n+1}u_{0}+b_{n}+a_{0,n}u_{0}+a_{1,n}\zeta_{1,n}(u_{0},v_{1}%
)+\cdots+a_{k,n}\zeta_{k,n}(u_{0},v_{1},\ldots,v_{k})=\nonumber\\
b_{n}+\left[  -\alpha_{n+1}+a_{0,n}+\sum_{j=1}^{k}a_{j,n}\left(  \prod
_{i=0}^{j-1}\alpha_{n-i}\right)  ^{-1}\right]  u_{0}-\sum_{j=1}^{k}a_{j,n}%
\sum_{i=1}^{j}\left(  \prod_{m=i}^{j}\alpha_{n-m+1}\right)  ^{-1}%
v_{i}\nonumber
\end{gather}

The above quantity is independent of $u_{0}$ if and only if the coefficient of
$u_{0}$ is zero for all $n;$ i.e., if and only if $\{\alpha_{n}\}$ satisfies
the difference equation (\ref{ricn}). Dropping the $u_{0}$ terms leaves the
following:%
\begin{equation}
b_{n}-\sum_{j=1}^{k}a_{j,n}\left[  \sum_{i=1}^{j}\left(  \prod_{m=i}^{j}%
\alpha_{n-m+1}\right)  ^{-1}v_{i}\right]  =b_{n}-\sum_{j=1}^{k}\left[
\sum_{i=j}^{k}a_{i,n}\left(  \prod_{m=i}^{j}\alpha_{n-m+1}\right)
^{-1}\right]  v_{j}. \label{feq}%
\end{equation}

From this expression, we obtain the sc-factorization of (\ref{Lde}). The
cofactor equation (\ref{genlincf}) is simply (\ref{tdfslc}) while the factor
equation is obtained using (\ref{tdf1}), the above calculations and
(\ref{feq}). Finally, (\ref{genlinf}) is obtained by slightly adjusting the
summation indices to simplify notation.
\end{proof}

\begin{remark}
(The inversion form symmetry)\ Equation (\ref{ricn}) is not only a consequence
of the invertible map criterion but it is also related to \textit{a different}
semiconjugate factorization. Think of $\{\alpha_{n}\}$ as a solution of the
following $k$-th order difference equation on the (multiplicative) unit group
$G$ of $R$%
\begin{equation}
r_{n+1}=a_{0,n}+\sum_{j=1}^{k}a_{j,n}\left(  \prod_{i=0}^{j-1}r_{n-i}\right)
^{-1}.\label{homg}%
\end{equation}

Consider the homogeneous part of (\ref{Lde}) i.e., the linear equation
\begin{equation}
x_{n+1}=a_{0,n}x_{n}+a_{1,n}x_{n-1}+\cdots+a_{k,n}x_{n-k}. \label{homp}%
\end{equation}

If $a_{j,n}\in G$ for all $j$ then (\ref{homg}) turns out to be a factor
equation of a sc-factorization in $G$ of (\ref{homp}) and the cofactor
equation is $x_{n+1}=r_{n+1}x_{n};$ see \cite{sedh}. The \textit{inversion
form symmetry} that yields this sc-factorization is $[u_{0}u_{1}^{-1}%
,u_{1}u_{2}^{-1},\ldots,u_{k-1}u_{k}^{-1}]$. This (nonlinear) form symmetry is
characteristic of all difference equations that are \textit{homogeneous of
order one}, linear or not; see Chapter 4 in \cite{fsor}. Notice that the
factor equation (\ref{homg}) of (\ref{homp}) relative to the inversion form
symmetry is nonlinear whereas the factor equation (\ref{genlinf}) relative to
the linear form symmetry is linear. This makes a further reduction of order
more difficult for (\ref{homg}) which is also not homogeneous of order one.
\end{remark}

\section{Characteristic equation and eigensequences\label{cees}}

The sequence $\{\alpha_{n}\}$ in Theorem \ref{sf1} is a solution of the
nonlinear difference equation (\ref{homg}). Since $\{\alpha_{n}\}$ plays a
fundamental role in the sc-factorization of (\ref{Lde}), it is necessary to
examine (\ref{homg}) closely. This equation may be written in a way that does
not involve inversion. Multiply it on both sides by the quantity $r_{n}%
r_{n-1}\ldots r_{n-k+1}$\smallskip%

\begin{align*}
r_{n+1}r_{n}r_{n-1}\ldots r_{n-k+1}  &  =a_{0,n}(r_{n}r_{n-1}\ldots
r_{n-k+1})+a_{1,n}(r_{n-1}\ldots r_{n-k+1})+\\
&  \qquad+a_{2,n}(r_{n-2}\ldots r_{n-k+1})+\cdots+a_{k-1,n}r_{n-k+1}+a_{k,n}%
\end{align*}
which may be written more succinctly as%
\begin{equation}
\prod_{i=0}^{k}r_{n-i+1}-\sum_{j=0}^{k-1}a_{j,n}\left(  \prod_{i=j}%
^{k-1}r_{n-i}\right)  -a_{k,n}=0. \label{es}%
\end{equation}

This equation is not as esoteric as it may appear at first glance. To clarify,
consider the special homogeneous case with constant coefficients, i.e.,
\begin{equation}
x_{n+1}=a_{0}x_{n}+a_{1}x_{n-1}+\cdots+a_{k}x_{n-k}. \label{Ldeh}%
\end{equation}

Then (\ref{es}) reduces to the following difference equation%
\begin{equation}
r_{n+1}r_{n}\ldots r_{n-k+1}-a_{0}(r_{n}\ldots r_{n-k+1})-a_{1}(r_{n-2}\ldots
r_{n-k+1})-\cdots-a_{k-1}r_{n-k+1}-a_{k}=0. \label{rica}%
\end{equation}

A \textit{constant solution (or fixed point) }$r_{n}=r$ of (\ref{rica}) must
satisfy the polynomial equation%
\begin{equation}
r^{k+1}-a_{0}r^{k}-a_{1}r^{k-1}-\cdots a_{k}=0. \label{cpe}%
\end{equation}

The right hand side of (\ref{cpe}) is recognizable as the characteristic
polynomial of (\ref{Ldeh}) whose roots are indeed the eigenvalues of the
linear homogeneous equation (\ref{Ldeh}).

\begin{definition}
\label{eig}The difference equation (\ref{es}) in a ring $R$ is the
\textbf{characteristic equation} of the homogeneous part of (\ref{Lde}), i.e.,
the linear difference equation (\ref{homp}). Each solution of (\ref{es}) in
$R$ is an \textbf{eigensequence} of (\ref{homp}). An \textbf{eigenvalue }is a
constant eigensequence. An eigensequence whose every term is a unit in the
ring is \textbf{unitary}. An eigensequence containing a zero divisor is
\textbf{improper}.
\end{definition}

Equations (\ref{es}) and (\ref{homg}) are not equivalent: every solution of
(\ref{homg}) is a unitary solution of (\ref{es}) but nonunitary solutions may
also exist for (\ref{es}) that translate into eigensequences that are not
unitary, or not proper.

\begin{example}
\label{ez}The second-order linear difference equation%
\begin{equation}
x_{n+1}=2x_{n}-4x_{n-1},\quad x_{0},x_{1}\in\mathbb{Z} \label{exz}%
\end{equation}
has the characteristic equation $r_{n+1}r_{n}-2r_{n}+4=0$. This has no
constant solutions (eigenvalues) in $\mathbb{Z}$ since the polynomial
$r^{2}-2r+4$ has complex roots $r=1\pm i\sqrt{3}$. But this characteristic
equation does have a (nonunitary) period 3 solution given by
$\{1,-2,4,1,-2,4,\ldots\}$ as may be checked by direct substitution. This
eigensequence is trivially proper since $\mathbb{Z}$ has no zero divisors and
it is in fact unitary in the field $\mathbb{Q}$ of rational numbers.

The difference equation (\ref{exz}) is also valid in finite rings
$\mathbb{Z}_{m}$ of integers modulo a given positive integer $m$ and different
cases occur. For instance, $\mathbb{Z}_{17}$ is a field so the above
eigensequence is unitary. But in $\mathbb{Z}_{18}$ the same sequence of period
3 is an improper eigensequence since all even numbers are zero divisors.
Improper eigensequences are not desirable because ring extenstions do not
render them unitary and Theorem \ref{sf1} cannot be applied. Finally, we
mention that for some values of $m$ the polynomial $r^{2}-2r+4$ has roots in
$\mathbb{Z}_{m}$ which are eigenvalues of (\ref{exz}); e.g., in the field
$\mathbb{Z}_{5}$ the roots are 3 and 6 while in the ring $\mathbb{Z}_{12}$ the
roots are $-2\equiv10(\operatorname{mod}12)$ and $4$ both of which are improper.
\end{example}

\begin{example}
\label{ef}Consider the difference equation%
\begin{equation}
x_{n+1}=x_{n}+x_{n-1} \label{fib}%
\end{equation}
that is also known as the Fibonacci recurrence because with initial values
$x_{0}=0$ and $x_{1}=1$ (\ref{fib}) generates the Fibonacci sequence
1,1,2,3,5,8,13\ldots, denoted $\{F_{n}\}$.\ The characteristic equation of
(\ref{fib}) is%
\begin{equation}
r_{n+1}r_{n}-r_{n}-1=0. \label{cef}%
\end{equation}

This equation has no solutions in the ring of integers $\mathbb{Z}$, constant
or otherwise. For let $r_{1},r_{2}\in\mathbb{Z}$ and note that $r_{1}\not =0$
because clealry $r_{n}=0$ does not solve (\ref{cef}). Now (\ref{cef}) has a
solution $r_{2}\in\mathbb{Z}$ if and only if $r_{1}=\pm1.$ Either $r_{1}=1$,
$r_{2}=2$ so that $r_{3}=3/2\not \in \mathbb{Z}$ or $r_{1}=-1$, $r_{2}=0$ and
no value is defined for $r_{3}.$ Hence, (\ref{fib}) has no eigensequences in
$\mathbb{Z}$.

The eigenvalues (constant eigensequences) of this equation are roots
$(1\pm\sqrt{5})/2$ of its characteristic polynomial $r^{2}-r-1.$ Thus
(\ref{fib}) has no eigenvalues in $\mathbb{Q};$ but unlike $\mathbb{Z}$, in
$\mathbb{Q}$ (\ref{cef}) can be stated as $r_{n+1}=1+1/r_{n}$. Iteration
starting from (say) $r_{0}=1$ yields $r_{n}=F_{n+1}/F_{n}$, a unitary
eigensequence for (\ref{fib}). Theorem \ref{sf1} then yields a
sc-factorization, in $\mathbb{Q}$, of (\ref{fib}) consisting of the pair of
equations%
\begin{align*}
t_{n+1}  &  =a_{0,n}^{\prime}t_{n},\quad a_{0,n}^{\prime}=-\frac{F_{n}%
}{F_{n+1}}\\
x_{n+1}  &  =\frac{F_{n+2}}{F_{n+1}}x_{n}+t_{n+1}.
\end{align*}

\end{example}

We close this section with the following corollary of Theorem \ref{sf1} for
second order equations. See the next section for examples.

\begin{corollary}
\label{b0}Let $R$ be a ring with identity and assume that the difference
equation
\[
x_{n+1}=a_{0,n}x_{n}+a_{1,n}x_{n-1}%
\]

has a unitary eigensequence $\{r_{n}\}$ in $R$; i.e., $r_{n+1}=a_{0,n}%
+a_{1,n}r_{n}^{-1}$ for all $n\geq1.$ If there is an integer $m\geq1$ such
that $a_{1,m}=0$ then%
\[
x_{n}=r_{n}r_{n-1}\cdots r_{m+1}x_{m}\quad\text{for all }n\geq m+1.
\]

\end{corollary}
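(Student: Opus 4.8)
The plan is to apply Theorem \ref{sf1} with $k=1$, so that (\ref{homp}) is exactly the given second-order equation (with $b_n=0$), and with the choice $\alpha_n=r_n$. The hypothesis that $\{r_n\}$ is a unitary eigensequence means precisely that $\{r_n\}$ is a unitary sequence satisfying (\ref{ricn}) for $k=1$, namely $r_{n+1}=a_{0,n}+a_{1,n}r_n^{-1}$. Hence Theorem \ref{sf1} applies and produces the semiconjugate factorization of the given equation into the factor equation
\[
t_{n+1}=a_{0,n}^{\prime}t_{n},\qquad a_{0,n}^{\prime}=-a_{1,n}r_{n}^{-1},
\]
obtained from (\ref{genlinf}) by specializing to $k=1$ and $b_n=0$ and using the stated formula for $a_{m,n}^{\prime}$ with $m=0$, together with the cofactor equation (\ref{genlincf}), which here reads $x_{n+1}=r_{n+1}x_{n}+t_{n+1}$ with $t_{n}=x_{n}-r_{n}x_{n-1}$.

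Next I would invoke the hypothesis $a_{1,m}=0$. Because the factor equation is first-order and homogeneous, once one of its terms vanishes, so do all subsequent terms. Concretely, $t_{m+1}=a_{0,m}^{\prime}t_{m}=-a_{1,m}r_{m}^{-1}t_{m}=0$, and then a routine induction on $n$ using $t_{n+1}=-a_{1,n}r_{n}^{-1}t_{n}$ gives $t_{n}=0$ for every $n\geq m+1$.

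Finally, substituting $t_{n+1}=0$ — valid for all $n\geq m$ — into the cofactor equation collapses it to the first-order homogeneous recursion $x_{n+1}=r_{n+1}x_{n}$ for $n\geq m$. Iterating this from $n=m$ upward, i.e.\ an easy induction on $n\geq m+1$ with base case $x_{m+1}=r_{m+1}x_{m}$, yields $x_{n}=r_{n}r_{n-1}\cdots r_{m+1}x_{m}$ for all $n\geq m+1$, which is the desired formula.

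There is no substantial obstacle here, since the content is carried entirely by Theorem \ref{sf1}; the only points needing a little care are the correct specialization of the coefficient formulas to $k=1$ and keeping the index ranges straight, in particular the fact that $t_{n+1}=0$ holds already for $n\geq m$, so that the collapsed cofactor recursion is valid starting at $n=m$. As an alternative one may bypass Theorem \ref{sf1} altogether and verify directly from the original recursion and the eigensequence identity that $t_{n}:=x_{n}-r_{n}x_{n-1}$ satisfies $t_{n+1}=-a_{1,n}r_{n}^{-1}t_{n}$, after which the same two-step argument ($t$ vanishes from $m+1$ on, then the cofactor telescopes) applies verbatim.
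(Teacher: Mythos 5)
Your proposal is correct and follows essentially the same route as the paper's own proof: specialize Theorem \ref{sf1} to $k=1$ with $\alpha_n=r_n$ to get the factor equation $t_{n+1}=-a_{1,n}r_n^{-1}t_n$ and cofactor $x_{n+1}=r_{n+1}x_n+t_{n+1}$, observe that $a_{1,m}=0$ forces $t_n=0$ for $n\geq m+1$ by induction, and then iterate the collapsed cofactor recursion. Your attention to the index ranges (that the collapsed recursion is valid starting at $n=m$) is a correct and slightly more careful rendering of the same argument.
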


\begin{proof}
By Theorem \ref{sf1} the second-order difference equation has a
sc-factorization%
\begin{align*}
t_{n+1}  &  =a_{0,n}^{\prime}t_{n}=-a_{1,n}r_{n}^{-1}t_{n},\quad t_{1}%
=x_{1}-r_{1}x_{0}\\
x_{n+1}  &  =r_{n+1}x_{n}+t_{n+1}%
\end{align*}

If $a_{1,m}=0$ then $t_{m+1}=0.$ Thus, $t_{m+2}=-a_{m+1}r_{m+1}^{-1}t_{m+1}=0$
and by induction, $t_{n}=0$ for $n\geq m+1.$ This implies that the cofactor
equation reduces to $x_{n}=r_{n}x_{n-1}$ for $n\geq m+1$. Upon iteration we
obtain the formula for the solution $\{x_{n}\}$ of the second-order equation.
\end{proof}

\section{Periodic coefficients\label{sper}}

In this section we study the following difference equation with periodic
coefficients in a nontrivial ring $R$, i.e.,%
\begin{equation}
x_{n+1}=a_{n}x_{n}+b_{n}x_{n-1},\qquad a_{n+p_{1}}=a_{n},\ b_{n+p_{2}}%
=b_{n},\ n=0,1,2,\ldots\label{pco2}%
\end{equation}
where the (minimal or prime) periods $p_{1},p_{2}$ are positive integers with
least common multiple $p=\operatorname{lcm}(p_{1},p_{2})$; a central question
is whether (\ref{pco2}) has an eigensequence of period $p$ in $R$. If so then
such an eigensequence yields a sc-factorization of the second-order equation
into a pair of first-order ones. This may occur subject only to algebraic
restrictions on the ring concerning the existence of roots for a quadratic
polynomial but regardless of whether (\ref{pco2}) has any \textit{periodic
solutions}. In general, one cannot expect an equation such as (\ref{pco2}) to
possess any periodic solutions but the problem of existence of peridoic
solutions has been studied previously; see e.g., \cite{phi}.

An eigensequence of period $p$ exists in $R$ if there is an initial value
$r_{1}\in R$ such that the characteristic equation of (\ref{pco2}), i.e., the
first-order quadratic difference equation%
\begin{equation}
r_{n+1}r_{n}=a_{n}r_{n}+b_{n} \label{eper}%
\end{equation}
has a solution of period $p$ in the ring $R.$ Suppose that there are $r_{j}\in
R$ that satisfy (\ref{eper}) for $j=1,2,\ldots,p.$ Then%
\[
r_{2}r_{1}=a_{1}r_{1}+b_{1},\quad r_{3}r_{2}=a_{2}r_{2}+b_{2}%
\]

Let $L_{1}=r_{1}$ so that $r_{2}L_{1}=a_{1}L_{1}+b_{1}$. For $j=2,\ldots,p$
define $L_{j+1}=a_{j}L_{j}+b_{j}L_{j-1}.$ Then $r_{2}r_{1}=r_{2}L_{1}=L_{2}$
so that
\begin{align*}
r_{3}L_{2}  &  =(r_{3}r_{2})r_{1}=a_{2}r_{2}r_{1}+b_{2}r_{1}=a_{2}L_{2}%
+b_{2}L_{1}=L_{3},\\
r_{4}L_{3}  &  =(r_{4}r_{3})r_{2}r_{1}=a_{3}r_{3}r_{2}r_{1}+b_{3}r_{2}%
r_{1}=a_{3}L_{3}+b_{3}L_{2}=L_{4},\\
&  \vdots
\end{align*}

By induction, for $j=2,\ldots,p$
\[
r_{j+1}L_{j}=(r_{j+1}r_{j})r_{j-1}\cdots r_{1}=a_{j}r_{j}r_{j-1}\cdots
r_{1}+b_{3}r_{j-1}\cdots r_{1}=a_{j}L_{j}+b_{j}L_{j-1}=L_{j+1}.
\]

This process yields a solution $\{r_{n}\}$ of (\ref{eper}) with period $p$ if
and only if $r_{p+1}=r_{1};$ thus,%
\begin{equation}
r_{1}L_{p}=r_{p+1}L_{p}=a_{p}L_{p}+b_{p}L_{p-1}\Rightarrow(r_{1}-a_{p}%
)L_{p}=b_{p}L_{p-1} \label{Lr}%
\end{equation}

The quantities $L_{1},\ldots,L_{p}$ that are generated above evidently depend
on $r_{1}$ in a linear way so there are $\alpha_{j},\beta_{j}\in R$ such that
\[
L_{j}=\alpha_{j}r_{1}+\beta_{j}%
\]
for $j=1,2,\ldots,p.$ Inserting this form in (\ref{Lr}) yields%
\begin{align}
(r_{1}-a_{p})(\alpha_{p}r_{1}+\beta_{p})-b_{p}(\alpha_{p-1}r_{1}+\beta_{p-1})
&  =0\nonumber\\
r_{1}\alpha_{p}r_{1}+r_{1}\beta_{p}-(a_{p}\alpha_{p}+b_{p}\alpha_{p-1}%
)r_{1}-(a_{p}\beta_{p}+b_{p}\beta_{p-1})  &  =0 \label{p2q}%
\end{align}

The definition of $L_{j}$ implies%
\begin{align*}
\alpha_{j+1}r_{1}+\beta_{j+1}  &  =a_{j}(\alpha_{j}r_{1}+\beta_{j}%
)+b_{j}(\alpha_{j-1}r_{1}+\beta_{j-1})\\
&  =(a_{j}\alpha_{j}+b_{j}\alpha_{j-1})r_{1}+a_{j}\beta_{j}+b_{j}\beta_{j-1}%
\end{align*}

Suppose that $R$ has an identity 1. By matching coefficients on the two sides
of the above equality, we see that the coefficients $\alpha_{j},\beta_{j}$
satisfy (\ref{pco2}) for $j=1,2,\ldots,p$ with initial values
\begin{equation}
\alpha_{0}=0,\ \alpha_{1}=1;\quad\beta_{0}=1,\ \beta_{1}=0. \label{01}%
\end{equation}

Using this fact to simplify (\ref{p2q}) we conclude that if $r_{1}$ is a root
of the following polynomial
\begin{equation}
r\alpha_{p}r+r\beta_{p}-\alpha_{p+1}r-\beta_{p+1}=0 \label{p2q1}%
\end{equation}
then the solution $\{r_{n}\}$ of (\ref{eper}) has period $p.$ These
observations prove the following result.

\begin{theorem}
\label{per}Let $R$ be a ring with identity 1 and for $j=1,2,\ldots,p$, let
$\alpha_{j},\beta_{j}$ be obtained by iteration from (\ref{pco2}) subject to
(\ref{01}).

(a) If $r_{1}$ is a root of the quadratic polynomial (\ref{p2q1}) in $R$ and
there are $r_{j}\in R$ satisfying (\ref{eper}) for $j=2,\ldots,p$ then
$\{r_{n}\}_{n=1}^{\infty}$ is an eigensequence of (\ref{pco2}) with preiod $p$.

(b) If a root $r_{1}$ of (\ref{p2q1}) in $R$ is a unit and the recurrence%
\begin{equation}
r_{j+1}=a_{j}+b_{j}r_{j}^{-1} \label{epr}%
\end{equation}
\noindent generates units $r_{2},\ldots,r_{p}$ in $R$ then $\{r_{n}%
\}_{n=1}^{\infty}$ is a unitary eigensequence of (\ref{pco2}) with preiod $p$
that yields the sc-factorization%
\begin{align*}
t_{n+1}  &  =-b_{n}r_{n}^{-1}t_{n},\quad t_{1}=x_{1}-r_{1}x_{0}\\
x_{n+1}  &  =r_{n+1}x_{n}+t_{n+1}.
\end{align*}

\end{theorem}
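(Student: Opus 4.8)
The plan is to verify the two assertions directly, since almost all the work has already been carried out in the discussion preceding the theorem; the proof is mostly a matter of assembling that discussion and invoking Theorem \ref{sf1}. First I would treat part (a). Given a root $r_{1}\in R$ of the quadratic polynomial \eqref{p2q1} and elements $r_{2},\ldots,r_{p}\in R$ satisfying \eqref{eper} for $j=2,\ldots,p$, I extend the finite list $r_{1},\ldots,r_{p}$ to a full sequence by demanding period $p$, i.e. $r_{n+p}=r_{n}$ for all $n\geq 1$. The point to check is that this periodic extension actually solves the characteristic equation \eqref{eper} for \emph{every} $n\geq 1$, not merely for $n=1,\ldots,p-1$. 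Because the coefficients $a_{n},b_{n}$ have period dividing $p$ (as $p=\operatorname{lcm}(p_{1},p_{2})$), it suffices to check the single ``wrap-around'' identity $r_{1}r_{p}=a_{p}r_{p}+b_{p}$; then periodicity of both sides propagates it to all $n\equiv 0\pmod p$, and the remaining residues are covered by hypothesis. This wrap-around identity is exactly \eqref{Lr}, which by the inductive computation of the $L_{j}$'s and the linear dependence $L_{j}=\alpha_{j}r_{1}+\beta_{j}$ is equivalent to \eqref{p2q}, and — after using \eqref{pco2}–\eqref{01} to rewrite $a_{p}\alpha_{p}+b_{p}\alpha_{p-1}=\alpha_{p+1}$ and $a_{p}\beta_{p}+b_{p}\beta_{p-1}=\beta_{p+1}$ — is precisely the statement that $r_{1}$ is a root of \eqref{p2q1}. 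So part (a) follows from the pre-theorem computation; I would present it by citing that derivation rather than repeating it.

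Next I would prove part (b). If the root $r_{1}$ is a unit and the recursion \eqref{epr}, namely $r_{j+1}=a_{j}+b_{j}r_{j}^{-1}$, keeps producing units $r_{2},\ldots,r_{p}$, then multiplying \eqref{epr} on the right by $r_{j}$ recovers $r_{j+1}r_{j}=a_{j}r_{j}+b_{j}$, so these $r_{j}$'s satisfy \eqref{eper}; hence part (a) applies and $\{r_{n}\}$, extended periodically, is an eigensequence of period $p$. One small thing to confirm is consistency at the wrap-around: the period-$p$ extension forces $r_{p+1}=r_{1}$, and since $r_{1}$ is a root of \eqref{p2q1} this is the same identity \eqref{Lr} already verified in (a), so \eqref{epr} for $j=p$ holds automatically and every $r_{n}$ is a unit. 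Thus $\{r_{n}\}$ is a \emph{unitary} eigensequence in the sense of Definition \ref{eig}, and in particular it satisfies the Riccati-type relation $r_{n+1}=a_{0,n}+a_{1,n}r_{n}^{-1}$ with $a_{0,n}=a_{n}$, $a_{1,n}=b_{n}$, which is exactly relation \eqref{ricn} in the second-order case $k=1$. Theorem \ref{sf1} (or its specialization Corollary \ref{b0}) then delivers the sc-factorization: the cofactor equation is $x_{n+1}=r_{n+1}x_{n}+t_{n+1}$ with $t_{1}=x_{1}-r_{1}x_{0}$, and the factor equation is the first-order linear equation $t_{n+1}=a_{0,n}^{\prime}t_{n}$ with $a_{0,n}^{\prime}=-a_{1,n}r_{n}^{-1}=-b_{n}r_{n}^{-1}$, which is the displayed pair.

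The only genuinely delicate point, and the one I would be careful about, is the wrap-around consistency in part (a): one must be sure that being a root of the quadratic \eqref{p2q1} is \emph{exactly} the right condition, i.e. that the linear-in-$r_{1}$ bookkeeping through the auxiliary quantities $L_{j}$ (equivalently $\alpha_{j},\beta_{j}$) correctly encodes $r_{p+1}=r_{1}$, and that noncommutativity of $R$ has been respected throughout — note the products $r_{j}r_{j-1}\cdots r_{1}$ are taken in a fixed order and the polynomial \eqref{p2q1} is written with $r$ on both sides of $\alpha_{p}$ precisely for that reason. Since that derivation is already spelled out in the lines leading to \eqref{p2q1}, the cleanest route is to phrase the proof of (a) as: ``This is the content of the computation preceding the statement,'' and then give the short argument for (b) as above. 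No new estimates or constructions are needed; the proof is essentially an application of Theorem \ref{sf1} once the eigensequence has been produced.
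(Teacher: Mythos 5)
Your proposal is correct and follows essentially the same route as the paper, whose proof of part (a) is precisely the computation with the $L_{j}$'s and the reduction of the wrap-around condition $r_{p+1}=r_{1}$ to the quadratic (\ref{p2q1}) given immediately before the statement, and whose part (b) is the same application of Theorem \ref{sf1} with $k=1$ that you describe. Your added attention to the wrap-around consistency and to the noncommutative ordering in (\ref{p2q1}) matches the care already implicit in the paper's derivation.
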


The polynomial (\ref{p2q1}) simplifies further if the coefficients
$a_{j},b_{j}$ are in the center of $R$. Then $\alpha_{j},\beta_{j}$ are also
in the center of $R$ so (\ref{p2q1}) reduces to
\begin{equation}
\alpha_{p}r^{2}+(\beta_{p}-\alpha_{p+1})r-\beta_{p+1}=0. \label{p2qc}%
\end{equation}
\ 

Note that if $a_{n}=a$ and $b_{n}=b$ are constants then (with $p=1$) the
quadratic polynomial (\ref{p2qc}) takes the form $r^{2}-ar-b=0,$ i.e., the
characteristic polynomial of the autonomous linear equation of order 2.

\begin{example}
Consider the difference equation%
\begin{equation}
x_{n+1}=2\cos\left(  \frac{2\pi n}{3}\right)  x_{n}+x_{n-1} \label{c1}%
\end{equation}
where $a_{n}=\cos2\pi n/3$ has period 3 with $a_{1}=a_{2}=-1$ and $a_{3}=2$
and $b_{n}=1$ is constant (period 1). Let us assume that the underlying ring
is the field $\mathbb{R}$ of real numbers. The numbers $\alpha_{j},\beta_{j}$
are readily calculated from (\ref{c1}) using (\ref{01}):
\[
\alpha_{2}=-1,\ \alpha_{3}=2,\ \alpha_{4}=3,\ \beta_{2}=1,\ \beta
_{3}=-1,\ \beta_{4}=-1.
\]

The quadratic equation (\ref{p2qc}) with $p=3$ is $2r^{2}-4r+1=0$ in this
case. Of the two real roots $(2\pm\sqrt{2})/2$, let $r_{1}=(2-\sqrt{2})/2.$
Then using (\ref{epr}) we readily calculate $r_{2}=1+\sqrt{2}$, $r_{3}%
=-2+\sqrt{2}$. Since these are units in $\mathbb{R}$, by Theorem \ref{per} an
eigensequence with period 3 is obtained. If $\rho=-1/(r_{1}r_{2}r_{3})$ then
the sc-factorization of (\ref{c1}) is obtained by straightfoward calculation
with the factor equation%
\[
t_{3j+1}=\rho^{j}t_{1},\ t_{3j+2}=-\frac{\rho^{j}t_{1}}{r_{1}},\ t_{3j+3}%
=\frac{\rho^{j}t_{1}}{r_{1}r_{2}},\ j\geq0,\ t_{1}=x_{1}-r_{1}x_{0}%
\]
and the cofactor $x_{n+1}=r_{n+1}x_{n}+t_{n+1}.$ Since $\rho=1+\sqrt{2}$ all
solutions of (\ref{c1}) with $t_{1}\not =0$ are unbounded. However, $t_{1}=0$
when the initial values satisfy $x_{1}=r_{1}x_{0}$. Then $t_{n}=0$ for all $n$
and%
\[
x_{3n}=\frac{(-1)^{n}x_{0}}{\rho^{n}},\ x_{3n+1}=\frac{r_{1}(-1)^{n}x_{0}%
}{\rho^{n}},\ x_{3n+2}=\frac{r_{1}r_{2}(-1)^{n}x_{0}}{\rho^{n}},\ n\geq1.
\]

These special solutions of (\ref{c1}) converge to 0 exponentially for all
$x_{0}$.

The above calculations are meaningful in rings other than $\mathbb{R}$. By way
of comparison, now suppose that the underlying ring of (\ref{c1}) is a finite
field of type $\mathbb{Z}_{p}$ where $p\geq5$ is a prime. In particular, the
quadratic polynomial $2r^{2}-4r+1$ factors in $\mathbb{Z}_{7}$ with roots
$3,6$. If $r_{1}=3$ then using (\ref{epr}) we find that $r_{2}=4$ and
$r_{3}=1$ do in fact yield an eigensequence with period 3. Then, modulo 7,
$\rho=-1/(r_{1}r_{2}r_{3})=4$ and
\[
\rho^{3i}=\left(  \rho^{3}\right)  ^{i}=1^{i}=1,\quad\rho^{3i+1}=4,\quad
\rho^{3i+2}=2
\]
and this pattern of period 3 for distinct powers of $\rho$ yield a pattern of
period 9 for $t_{n}$ as shown in the following table%
\[%
\begin{tabular}
[c]{|c|c|c|c|}\hline
$j$ & 0 & 1 & 2\\\hline
$t_{3j+1}$ & $t_{1}$ & $4t_{1}$ & $2t_{1}$\\\hline
$t_{3j+2}$ & $2t_{1}$ & $t_{1}$ & $4t_{1}$\\\hline
$t_{3j+3}$ & $3t_{1}$ & $5t_{1}$ & $6t_{1}$\\\hline
\end{tabular}
\]
where $t_{1}=x_{1}-3x_{0}.$ In calculating the above entries we used
$1/r_{1}=5$ and $1/r_{1}r_{2}=3$ (modulo 7). Of course all solutions in
$\mathbb{Z}_{7}$ all $\{t_{n}\}$ and $\{x_{n}\}$ will be periodic with period
at most 48 since $\mathbb{Z}_{7}\times$ $\mathbb{Z}_{7}$ has that many points
(besides (0,0) which yields the trivial solution).
\end{example}

If quadratic polynomial (\ref{p2q1}) (or (\ref{p2qc}) in the commutative case)
has no roots in the underlying ring then periodic eigensequences with peirod
$p$ do not exist in that ring. However, other periodic eigensequences may
exist; e.g., for the autonomous equation (\ref{exz}) where $p=1$ recall that
there are no integer eigenvalues but there is an eigensequence of period 3 in
$\mathbb{Z}$. In some cases, a nonperiodic but still useful eigensequence may
exist as in the next example.

\begin{example}
Consider the following variant of (\ref{c1}) in $\mathbb{R}$:%
\begin{equation}
x_{n+1}=2\cos\left(  \frac{2\pi n}{3}\right)  x_{n}-x_{n-1}. \label{c2}%
\end{equation}

We find that $\alpha_{2}=-1,\ \alpha_{3}=0,\ \alpha_{4}=1,\ \beta
_{2}=-1,\ \beta_{3}=1,\ \beta_{4}=3.$ With these coefficients (\ref{p2qc}) has
no roots so (\ref{c2}) has no period-3 eigensequences. But the recurrence
(\ref{epr}) can be used to generate other types of eigensequences. For
instance, if $r_{1}=1$ then it can be verified by induction that%
\[
r_{3j+1}=3j+1,\ r_{3j+2}=-\frac{3j+2}{3j+1},\ r_{3j+3}=-\frac{1}{3j+2}%
,\ j\geq0
\]
is a (nonperiodic) eigensequence for (\ref{c2}). Note that $r_{n}%
r_{n+1}r_{n+2}=1$ for all $n$ so%
\[
t_{3j+1}=t_{1}\prod_{i=1}^{3j}\frac{1}{r_{i}}=t_{1}\prod_{i=0}^{j-1}\frac
{1}{r_{3i+1}r_{3i+2}r_{3i+3}}=t_{1}=x_{1}-x_{0}%
\]
and the factor equation may be expressed as%
\[
t_{3j+1}=t_{1},\ t_{3j+2}=\frac{t_{1}}{3j+1},\ t_{3j+3}=-\frac{t_{1}}%
{3j+2},\ j\geq0.
\]

The cofactor can now be specified as follows%
\begin{align*}
x_{3j+1}  &  =(3j+1)x_{3j}+t_{1},\\
x_{3j+2}  &  =-\frac{3j+2}{3j+1}x_{3j+1}+\frac{t_{1}}{3j+1}=-(3j+2)x_{3j}%
-t_{1},\\
x_{3j+3}  &  =-\frac{1}{3j+2}x_{3j+2}-\frac{t_{1}}{3j+2}=x_{3j}%
\end{align*}

The last equation implies that $x_{3j}=x_{0}$ for all $j$ so the general
solution of (\ref{c2}) is%
\[
x_{n}=\left\{
\begin{array}
[c]{l}%
x_{0}\quad\text{if }n=3j\\
nx_{0}+x_{1}\quad\text{if }n=3j+1\\
-nx_{0}-x_{1}\quad\text{if }n=3j+2
\end{array}
\right.  .
\]

In particular, if $x_{0}=0$ then the solution $\{0,x_{1},-x_{1},\ldots\}$ of
(\ref{c2}) has period 3 for all $x_{1}\not =0.$
\end{example}

\section{Unitary solutions and eigensequences\label{unisol}}

A potential difficulty in applying Theorem \ref{sf1} is finding the sequence
$\{\alpha_{n}\}$ that satisfies (\ref{homg}), i.e., finding a solution of
(\ref{es}). Even if such a solution exists then in many cases finding it
directly from (\ref{es}) is usually not easy. Fortunately, it is often
possible to calculate $\{\alpha_{n}\}$ \textit{indirectly}, by extracting it
from a \textit{unitary solution} of (\ref{homp}) in $R;$ i.e., a solution of
(\ref{homp}) that is contained in the unit group $G$. Let $\{x_{n}\}$ be such
a unitary solution for a given set of initial values $x_{0},x_{1},\ldots
,x_{k}\in G$. Multiplying (\ref{homp}) by $x_{n}^{-1}$ and rearranging terms
gives%
\begin{align*}
x_{n+1}x_{n}^{-1}  &  =a_{0,n}+a_{1,n}x_{n-1}x_{n}^{-1}+a_{2,n}x_{n-2}%
x_{n}^{-1}+\cdots+a_{k,n}x_{n-k}x_{n}^{-1}\\
&  =a_{0,n}+a_{1,n}x_{n-1}x_{n}^{-1}+a_{2,n}x_{n-2}(x_{n-1}^{-1}x_{n-1}%
)x_{n}^{-1}+\cdots\\
&  \quad\quad\quad+a_{k,n}x_{n-k}(x_{n-k+1}^{-1}x_{n-k+1})(x_{n-k+2}%
^{-1}x_{n-k+2})\cdots(x_{n-1}^{-1}x_{n-1})x_{n}^{-1}\\
&  =a_{0,n}+a_{1,n}(x_{n}x_{n-1}^{-1})^{-1}+a_{2,n}(x_{n-1}x_{n-2})^{-1}%
(x_{n}x_{n-1}^{-1})^{-1}+\cdots\\
&  \quad\qquad+a_{k,n}(x_{n-k+1}x_{n-k}^{-1})^{-1}(x_{n-k+2}x_{n-k+1}%
^{-1})^{-1}\cdots(x_{n}x_{n-1}^{-1})^{-1}%
\end{align*}

If $r_{n}=x_{n}x_{n-1}^{-1}$ for each $n$ then the above equation can be
written as%
\begin{align}
r_{n+1}  &  =a_{0,n}+a_{1,n}r_{n}^{-1}+a_{2,n}r_{n-1}^{-1}r_{n}^{-1}%
+\cdots+a_{k,n}r_{n-k+1}^{-1}r_{n-k+2}^{-1}\cdots r_{n-1}^{-1}r_{n}%
^{-1},\text{ or:}\nonumber\\
r_{n+1}  &  =a_{0,n}+a_{1,n}r_{n}^{-1}+a_{2,n}(r_{n}r_{n-1})^{-1}%
+\cdots+a_{k,n}(r_{n}r_{n-1}\ldots r_{n-k+1})^{-1} \label{ricn1}%
\end{align}
which is precisely Equation (\ref{homg}). Thus, the sequence $\{r_{n}\}$ of
right ratios of $\{x_{n}\}$ satisfies (\ref{homg}). It is often easier to find
a unitary solution of (\ref{homp}) than to look for a particular solution of
(\ref{homg}). Once a unitary solution of (\ref{homp}) is identified, an
eigensequence may be extracted from it using the next result that supplements
and completes Theorem \ref{sf1}.

\begin{theorem}
\label{er}Let $R$ be a ring with identity. A (unitary) sequence in $R$ is an
eigensequence of (\ref{homp}) if and only if it is the right ratio sequence of
a unitary solution of (\ref{homp}).
\end{theorem}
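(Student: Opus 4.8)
The plan is to prove the two directions separately, exploiting the computation already carried out in the paragraph preceding the theorem. For the ``only if'' direction, suppose $\{r_n\}$ is a unitary eigensequence of (\ref{homp}), i.e.\ a unitary solution of (\ref{homg})/(\ref{ricn1}). I would construct a unitary solution $\{x_n\}$ of (\ref{homp}) whose right ratio sequence is $\{r_n\}$ by working backwards: set $x_k=1$ (or any unit) and define $x_{k-1},x_{k-2},\dots$ so that $x_j x_{j-1}^{-1}=r_j$, equivalently $x_{j-1}=r_j^{-1}x_j$; this determines a unitary finite string $x_0,\dots,x_k$, and then continue it forward by the recurrence (\ref{homp}) itself. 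The content is to check that this forward continuation stays unitary and that its right ratios continue to equal $r_n$ for all $n\geq1$. But that is exactly the reverse of the displayed computation: starting from $r_{n+1}=a_{0,n}+a_{1,n}r_n^{-1}+\cdots+a_{k,n}(r_n\cdots r_{n-k+1})^{-1}$, multiply through on the right by $x_n$ and use $r_j=x_jx_{j-1}^{-1}$ to telescope the inverse products back into $x_{n-i}x_n^{-1}$ terms, obtaining $x_{n+1}x_n^{-1}=a_{0,n}+a_{1,n}x_{n-1}x_n^{-1}+\cdots+a_{k,n}x_{n-k}x_n^{-1}$; multiplying by $x_n$ on the right recovers (\ref{homp}). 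One should run this as an induction on $n$: assuming $x_n,\dots,x_{n-k+1}$ are units with $x_jx_{j-1}^{-1}=r_j$, the identity shows $x_{n+1}=r_{n+1}x_n$, which is a product of units, hence a unit, and its right ratio is $r_{n+1}$; this closes the induction.

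For the ``if'' direction there is essentially nothing new to do: if $\{x_n\}$ is a unitary solution of (\ref{homp}) and $r_n=x_nx_{n-1}^{-1}$, then the displayed calculation immediately preceding the theorem statement shows that $\{r_n\}$ satisfies (\ref{homg}), which is the nonlinear form of the characteristic equation (\ref{es}); since each $r_n$ is a ratio of units it is a unit, so $\{r_n\}$ is a unitary eigensequence of (\ref{homp}) by Definition~\ref{eig}. I would simply cite that computation rather than repeat it, perhaps noting that (\ref{homg}) and (\ref{es}) are equivalent for unitary sequences (multiply through by $r_n r_{n-1}\cdots r_{n-k+1}$ as was done to derive (\ref{es})).

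The only mild subtlety---and the step I expect to require the most care---is the bookkeeping in the ``only if'' direction: noncommutativity means the telescoping of the products $r_{n-k+1}^{-1}\cdots r_n^{-1}$ back into $x_{n-k}x_n^{-1}$ must be done on the correct side, exactly mirroring the chain of insertions $x_{n-i}^{-1}x_{n-i}$ used in the paragraph above the theorem, and the base of the induction needs the initial segment $x_0,\dots,x_k$ to be genuinely unitary, which is why one builds it from $x_k$ downward via $x_{j-1}=r_j^{-1}x_j$ rather than trying to prescribe $x_0$. Everything else is a routine induction. It is worth remarking that this theorem, combined with Theorem~\ref{sf1}, reduces the search for a semiconjugate factorization of (\ref{Lde}) to the frequently easier problem of exhibiting a single unitary solution of its homogeneous part (\ref{homp}).
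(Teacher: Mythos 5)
Your proposal is correct and follows essentially the same route as the paper: both directions rest on the telescoping identity $r_{n+1}x_n=a_{0,n}x_n+a_{1,n}x_{n-1}+\cdots+a_{k,n}x_{n-k}$ obtained by multiplying the characteristic equation on the right by $x_n$, with the converse deferred to the computation preceding the theorem. The only cosmetic difference is that you build the initial unitary segment backward from $x_k$ via $x_{j-1}=r_j^{-1}x_j$ while the paper builds it forward from an arbitrary $x_0\in G$ via $x_j=r_jx_{j-1}$; the two are equivalent.
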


\begin{proof}
Let $\{r_{n}\}$ be a unitary eigensequence of (\ref{homp}), choose $x_{0}\in
G$ and define $x_{j}=r_{j}x_{j-1}$ for $j=1,\ldots,k.$ Then $x_{j}\in G$ for
each $j$ and%
\begin{align*}
r_{j+1}x_{j}  &  =(a_{0,n}+a_{1,n}r_{j}^{-1}+a_{2,n}r_{j-1}^{-1}r_{j}%
^{-1}+\cdots+a_{k,n}r_{j-k+1}^{-1}r_{j-k+2}^{-1}\cdots r_{j-1}^{-1}r_{j}%
^{-1})x_{j}\\
&  =a_{0,n}x_{j}+a_{1,n}r_{j}^{-1}x_{j}+a_{2,n}r_{j-1}^{-1}r_{j}^{-1}%
x_{j}+\cdots+a_{k,n}r_{j-k+1}^{-1}r_{j-k+2}^{-1}\cdots r_{j-1}^{-1}r_{j}%
^{-1}x_{j}\\
&  =a_{0,n}x_{j}+a_{1,n}x_{j-1}+a_{2,n}r_{j-1}^{-1}x_{j-1}+\cdots
+a_{k,n}r_{j-k+1}^{-1}r_{j-k+2}^{-1}\cdots r_{j-2}^{-1}r_{j-1}^{-1}x_{j-1}%
\end{align*}
where we used the fact that $r_{j}^{-1}x_{j}=x_{j-1}.$ Similarly,
$r_{j-1}^{-1}x_{j-1}=x_{j-2}$ which yields a further reduction%
\[
r_{j+1}x_{j}=a_{0,n}x_{j}+a_{1,n}x_{j-1}+a_{2,n}x_{j-2}+\cdots+a_{k,n}%
r_{j-k+1}^{-1}r_{j-k+2}^{-1}\cdots r_{j-2}^{-1}x_{j-2}.
\]

Next, $r_{j-2}^{-1}x_{j-2}=x_{j-3}$ and the above calculation may be continued
to ultimately yield%
\[
r_{j+1}x_{j}=a_{0,n}x_{j}+a_{1,n}x_{j-1}+a_{2,n}x_{j-2}+\cdots+a_{k,n}x_{j-k}%
\]

Define the right hand side as $x_{j+1}$, then proceed to $r_{j+2}x_{j+1}$ and
repeat the calculate to generate a new value%
\[
x_{j+2}=r_{j+2}x_{j+1}=a_{0,n}x_{j+1}+a_{1,n}x_{j}+a_{2,n}x_{j-1}%
+\cdots+a_{k,n}x_{j+1-k}%
\]

The values of $x_{n}$ generated by the above construction satisfy the linear
equation (\ref{homp}) for $n=j+1,j+2,\ldots$ Therefore, $\{x_{n}\}$ is a
unitary solution of (\ref{homp}) whose right ratio sequence is $\{r_{n}\}$ (by
construction). The converse is true by the definition of eigensequence and the
argument preceding this theorem.
\end{proof}

For the homogeneous difference equation (\ref{Ldeh}) with constant
coefficients the following is true.

\begin{corollary}
\label{ccde}Let $\{a_{i}\}$, $i=1,\ldots,k$ be constants in a ring $R$ with
identity such that $a_{k}\not =0$ and let $G$ be the unit group of $R.$

(a) Equation (\ref{Ldeh}) has a linear form symmetry if and only if it has a
unitary solution $\{u_{n}\}$ in $R.$ In this case, the corresponding
sc-factorization is determined by the eigensequence $\{\alpha_{n}\}$ of right
ratios $\alpha_{n}=u_{n}u_{n-1}^{-1}$ of $\{u_{n}\}$ as follows%
\begin{align}
t_{n+1}  &  =a_{0}^{\prime}t_{n}+a_{1}^{\prime}t_{n-1}+\cdots+a_{k-1}^{\prime
}t_{n-k+1},\label{ccf}\\
x_{n+1}  &  =\alpha_{n+1}x_{n}+t_{n+1},\nonumber
\end{align}

where $a_{m}^{\prime}=-\sum_{i=m+1}^{k}a_{i}\left(  \prod_{j=m+1}^{i}%
\alpha_{n-j+1}\right)  ^{-1}$ for $m=0,\ldots,k-1$.

(b) Every fixed point of (\ref{rica}) is an eigenvalue of (\ref{Ldeh}) in $R$.
Such a fixed point exists if and only if the characteristic polynomial
$r^{k+1}-a_{0}r^{k}-\cdots-a_{k-1}r-a_{k}$ has a root in $R.$
\end{corollary}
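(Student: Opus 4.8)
The plan is to read Corollary~\ref{ccde} off the two main theorems already in hand: part (a) is Theorem~\ref{sf1} and Theorem~\ref{er} specialized to the homogeneous constant-coefficient equation (\ref{Ldeh}) (so $b_n=0$ and $a_{j,n}=a_j$ for every $n$), and part (b) is nothing more than Definition~\ref{eig} together with a single substitution. No new machinery is needed; the task is to chain the implications in the right order and specialize the formulas correctly.

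For the equivalence in (a) I would argue both directions. ($\Leftarrow$) Given a unitary solution $\{u_n\}$ of (\ref{Ldeh}), put $\alpha_n=u_nu_{n-1}^{-1}$. These right ratios are units, and the computation immediately preceding Theorem~\ref{er} (equivalently, Theorem~\ref{er} itself) shows that $\{\alpha_n\}$ satisfies the characteristic equation (\ref{homg}) of (\ref{Ldeh}) with constant coefficients, i.e. $\alpha_{n+1}=a_0+\sum_{j=1}^{k}a_j\bigl(\prod_{i=0}^{j-1}\alpha_{n-i}\bigr)^{-1}$, which is exactly relation (\ref{ricn}) in this case. Theorem~\ref{sf1} then yields the linear form symmetry (\ref{lfs}) with unit coefficients $\{\alpha_n\}$ together with the sc-factorization (\ref{genlinf})--(\ref{genlincf}); setting $b_n=0$ and $a_{i,n}=a_i$ in those equations and in the coefficient formula $a'_{m,n}=-\sum_{i=m+1}^{k}a_{i,n}\bigl(\prod_{j=m+1}^{i}\alpha_{n-j+1}\bigr)^{-1}$ produces precisely (\ref{ccf}). ($\Rightarrow$) Conversely, if (\ref{Ldeh}) has the linear form symmetry (\ref{lfs}), then Lemma~\ref{tdlfs} provides a unitary sequence $\{\alpha_n\}$ for which the criterion (\ref{tdlfscr}) is independent of $u_0$, and the computation in the proof of Theorem~\ref{sf1} (with the $f_n$ the linear constant-coefficient maps and $b_n=0$) identifies that criterion with relation (\ref{ricn}) with constant coefficients. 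Hence $\{\alpha_n\}$ is a unitary eigensequence of (\ref{Ldeh}), so by Theorem~\ref{er} it is the right-ratio sequence of some unitary solution $\{u_n\}$; in particular (\ref{Ldeh}) has a unitary solution, and the corresponding sc-factorization is the one described above.

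For (b) I would work straight from the definitions. By Definition~\ref{eig} the characteristic equation of (\ref{Ldeh}) is (\ref{es}) with constant coefficients, i.e. exactly (\ref{rica}), and an eigenvalue is by definition a constant eigensequence, i.e. a fixed point $r_n\equiv r$ of (\ref{rica}); thus ``every fixed point of (\ref{rica}) is an eigenvalue'' is immediate. Substituting $r_n\equiv r$ collapses every ordered product of $k$ or $k+1$ copies of $r$ to the corresponding power of $r$ — which is where a possibly noncommutative $R$ causes no difficulty, since a product of copies of a single element is unambiguous — and (\ref{rica}) becomes $r^{k+1}-a_0r^{k}-\cdots-a_{k-1}r-a_k=0$, namely (\ref{cpe}). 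Therefore the fixed points of (\ref{rica}), the roots in $R$ of the characteristic polynomial $r^{k+1}-a_0r^{k}-\cdots-a_{k-1}r-a_k$ (coefficients acting on the left, the appropriate notion when $R$ is noncommutative), and the eigenvalues of (\ref{Ldeh}) coincide; in particular such a fixed point exists if and only if that polynomial has a root in $R$. The standing hypothesis $a_k\ne 0$ only serves to exclude $r=0$.

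I do not expect a genuine obstacle here, since the corollary is a repackaging of Theorems~\ref{sf1} and~\ref{er} and Lemma~\ref{tdlfs}. The points that need care are: (i) Theorem~\ref{sf1} is stated as a one-way implication, so the ``only if'' half of (a) must be extracted from the ``if and only if'' of Lemma~\ref{tdlfs} together with the computation inside the proof of Theorem~\ref{sf1}; (ii) the bookkeeping required to see that the general sc-factorization formulas really reduce to (\ref{ccf}) in the constant-coefficient homogeneous case; and (iii) confirming that the constant-sequence substitution used in (b) remains valid over a ring that need not be commutative.
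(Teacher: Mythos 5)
Your proposal is correct and follows exactly the route the paper intends: the paper states Corollary~\ref{ccde} without proof as an immediate specialization of Theorem~\ref{sf1}, Theorem~\ref{er}, and the discussion of the characteristic equation (\ref{rica})/(\ref{cpe}) in Section~\ref{cees}, and your argument supplies precisely those specializations, including the needed appeal to Lemma~\ref{tdlfs} for the ``only if'' half of (a) since Theorem~\ref{sf1} is stated only as a one-way implication. No gaps.
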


\begin{remark}
Equation (\ref{ccf}) is once again linear but with order less than
(\ref{Ldeh}) by one. If (\ref{ccf}) also possesses a unitary solution in $R$
then it is reducible in order by a second application of the preceding
corollary. This process may be repeated $k$ times to yield a triangular system
of $k+1$ first order linear equations; see \cite{fsor}. In particular, if $R$
is an algebraically closed field then the characteristic polynomial of
(\ref{Ldeh}) factors completely in $R$ and thus, a set of $k+1$ eigenvalues is
made available for a full sc-factorization.
\end{remark}

\begin{example}
\label{3ode}Consider the following third-order linear difference equation in
the finite field $\mathbb{Z}_{p}$ where $p$ is a prime number%
\begin{equation}
x_{n+1}=2x_{n-1}+x_{n-2},\quad x_{0},x_{1},x_{2}\in\mathbb{Z}_{p}.
\label{fib3}%
\end{equation}

The characteristic polynomial of (\ref{fib3}) is $r^{3}-2r-1=(r+1)(r^{2}-r-1)$
with a root $-1\in\mathbb{Z}_{p}$. Thus Corollary \ref{ccde} yields a
sc-factorization%
\[
t_{n+1}=a_{0}^{\prime}t_{n}+a_{1}^{\prime}t_{n-1},\quad x_{n+1}=-x_{n}%
+t_{n+1}.
\]
where $t_{1}=x_{0}+x_{1}$, $t_{2}=x_{1}+x_{2}$. The constant coefficients are
$a_{0}^{\prime}=a_{1}^{\prime}=1$ which yield the factor equation
$t_{n+1}=t_{n}+t_{n-1}$, i.e., the Fibonacci recurrence (\ref{fib}). Its
characteristic polynomial is $r^{2}-r-1$. Corollary \ref{ccde} may be applied
as long as (\ref{fib}) has a unitary solution in $\mathbb{Z}_{p}$, i.e., a
solution that never visits zero; $\mathbb{Z}_{p}$ contains such a
zero-avoiding solution of (\ref{fib}) for primes of type $p\equiv
0,1,4(\operatorname{mod}5)$ and infinitely many primes of type $p\equiv
2,3(\operatorname{mod}5)$; see \cite{gup} and \cite{sedf}. Let $p$ be such a
prime and $\{u_{n}\}$ a zero-avoiding solution of (\ref{fib}) in
$\mathbb{Z}_{p}.$ Repeating the calculations in Example \ref{ef} but replacing
$F_{n}$ with $u_{n}$ gives the following sc-factorization for (\ref{fib3})%
\begin{align*}
x_{n+1}  &  =-x_{n}+t_{n+1},\\
t_{n+1}  &  =\frac{u_{n+1}}{u_{n}}t_{n}+s_{n+1},\quad t_{1}=x_{1}+x_{0}\\
s_{n+1}  &  =-\frac{u_{n-1}}{u_{n}}s_{n},\qquad\quad\ s_{2}=t_{2}-\frac{u_{2}%
}{u_{1}}t_{1}.
\end{align*}

\end{example}

\section{A note on the Poincar\'{e}-Perron Theorem\label{ppt}}

The fact that eigensequences are ratio sequences of unitary solutions recalls
the celebrated theorem of Poincar\'{e} and Perron; see \cite{per} and
\cite{pon}, or Section 8.2 of \cite{elay}. In the language of eigensequences,
the theorem may be stated as follows: \medskip

\textquotedblleft\textit{Let }$R=\mathbb{C}$\textit{ and assume that the
coefficients }$a_{i,n}$\textit{ in (\ref{homp}) converge to constants }$a_{i}%
$\textit{ as }$n\rightarrow\infty$\textit{ ; i.e., (\ref{homp}) is a
Poincar\'{e} difference equation. Then each eigenvalue of (\ref{Ldeh}) is a
limit of an eigensequence of (\ref{homp}).\textquotedblright} \medskip

\begin{example}
Consider the following Poincar\'{e} equation in the field
$\mathbb{R}$ of real numbers%
\begin{equation}
x_{n+1}=\frac{1}{n}x_{n}+x_{n-1} \label{pp1}%
\end{equation}

The limiting autonomous equation for this is $y_{n+1}=y_{n-1}$ whose
eigenvalues are $\pm1,$ i.e., the roots of $r^{2}-1=0.$ The characteristic
equation of (\ref{pp1}) is
\begin{equation}
r_{n+1}=\frac{1}{n}+\frac{1}{r_{n}}. \label{pp1e}%
\end{equation}

It is readily verified by induction that the solution of (\ref{pp1e}) with
$r_{1}=1$ may be expressed as%
\[
r_{2n-1}=1,\quad r_{2n}=\frac{2n}{2n-1}.
\]

Thus $\lim_{n\rightarrow\infty}r_{n}=1$, as expected. Having the above eigensequence
explicitly is actually much more significant than this convergence result;
it yields the general solution of (\ref{pp1}). First, we obtain a
semiconjugate factorization of (\ref{pp1}) with factor equation%
\[
t_{2n}=-t_{2n-1},\quad t_{2n+1}=-\frac{2n-1}{2n}t_{2n}.
\]

By straightforward iteration%
\[
t_{2n+1}=\frac{(2n)!}{4^{n}(n!)^{2}},\quad t_{2n+2}=-t_{2n+1}.
\]

Finally, using the cofactor equation $x_{n+1}=r_{n+1}x_{n}+t_{n+1}$ a formula
for the general solution of (\ref{pp1}) may be obtained if desired.

\end{example}

Autonomous difference equations are trivially of Poincar\'{e}
type and each is its own limiting equation. Such equations are good for
illustrating some aspects of the Poincar\'{e}--Perron Theorem. For instance,
to see that not every eigensequence of a Poincar\'{e} difference equation
converges to an eigenvalue of the limiting equation, recall from earlier
discussion that Equation (\ref{exz}) has complex eigenvalues. Therefore, its
real eigensequences cannot converge to such eigenvalues.

Whether some eigensequences of a Poincar\'{e} difference equation in
topological rings more general than $\mathbb{R}$ or $\mathbb{C}$ (e.g., Banach
algebras)\ converge to eigenvalues of the limiting equation is an interesting
problem for future discussion.

\section{Zero-avoiding solutions and the ring of quotients\label{zav}}

If $R$ is a commutative ring with identity and no zero divisors (i.e., $R$ is
an \textit{integral domain}) then its complete ring of quotients is a field in
which $R$ is embedded (see, e.g., \cite{hun}, Chapter 3). We denote this
\textit{field of quotients} by $Q_{R}$ which contains an isomorphic copy of
$R$. The unit group of $Q_{R}$ is the set $Q_{R}\backslash\{0\}$ of all
nonzero elements of $Q_{R}$ which contains $R\backslash\{0\}$. If $Q_{R}$ is
not isomorphic to $R$ then $Q_{R}$ has an abundance of units that do not exist
in $R.$

Let us call a sequence $\{x_{n}\}$ \textit{zero-avoiding} if $x_{n}\not =0$
for all $n.$ In particular, if $R$ is a field then a sequence is zero-avoiding
if and only if it is unitary. The next result is a consequence of Theorems
\ref{sf1} and \ref{er} that reduces the search for eigensequences to a search
for zero-avoiding solutions.

\begin{theorem}
\label{qf}Let $R$ be an integral domain with field of quotients $Q_{R}$ and
assume that the parameters $a_{j,n},b_{n}$ in (\ref{Lde}) are in $R$ for all
$n$ and all $j=0,1,\ldots,k.$

(a) If $\{x_{n}\}$ is a zero-avoiding solution of (\ref{homp}) in $R$ then
$\{x_{n}x_{n-1}^{-1}\}$ is a (unitary) eigensequence of (\ref{homp}) in
$Q_{R}.$

(b) If (\ref{homp}) has a zero-avoiding solution $\{x_{n}\}$ in $R$ then
(\ref{Lde}) has a semiconjugate factorization in $Q_{R}$ consisting of the
pair of equations (\ref{genlinf}) and (\ref{genlincf}) with parameters
$\alpha_{n}=x_{n}x_{n-1}^{-1}$ and $a_{m,n}^{\prime}$ in $Q_{R}$ for all
$m,n.$
\end{theorem}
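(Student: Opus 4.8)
The plan is to deduce Theorem \ref{qf} directly from Theorems \ref{sf1} and \ref{er} by passing from the integral domain $R$ to its field of quotients $Q_R$, in which "zero-avoiding" and "unitary" coincide. The only real work is keeping track of which ring each object lives in.

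For part (a), I would first observe that the canonical embedding $R\hookrightarrow Q_R$ is a ring homomorphism sending $1$ to $1$, so a sequence in $R$ satisfying (\ref{homp}) with coefficients $a_{j,n}\in R$ also satisfies (\ref{homp}), now read in $Q_R$, with the same coefficients regarded as elements of $Q_R$. Since the unit group of the field $Q_R$ is $Q_R\setminus\{0\}$, a zero-avoiding solution $\{x_n\}$ of (\ref{homp}) in $R$ is, viewed in $Q_R$, a \emph{unitary} solution of (\ref{homp}). Applying Theorem \ref{er} with the underlying ring taken to be $Q_R$, the right ratio sequence $\{x_n x_{n-1}^{-1}\}$, formed in $Q_R$, is a unitary eigensequence of (\ref{homp}) in $Q_R$, which is exactly the assertion of (a).

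For part (b), set $\alpha_n=x_n x_{n-1}^{-1}$. By (a) this is a unitary eigensequence of (\ref{homp}) in $Q_R$, i.e. it solves the characteristic equation (\ref{es}) over $Q_R$; being unitary, it therefore solves the inversion-free form (\ref{homg}), equivalently it satisfies relation (\ref{ricn}) over $Q_R$ (alternatively, this is read off directly from the computation preceding Theorem \ref{er}, which shows the right ratios of any unitary solution of (\ref{homp}) satisfy (\ref{homg})). Since the coefficients $a_{j,n}$ and the forcing terms $b_n$ of (\ref{Lde}) lie in $R\subseteq Q_R$, Theorem \ref{sf1} applied over $Q_R$ then gives that (\ref{Lde}) has the linear form symmetry (\ref{lfs}) with unit coefficients $\{\alpha_n\}$ and the stated semiconjugate factorization into (\ref{genlinf}) and (\ref{genlincf}), with the coefficients $a_{m,n}'$ given by the formula in Theorem \ref{sf1}. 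These $a_{m,n}'$ are built from products of the $\alpha_{n-j+1}$ and their inverses, so in general they need not lie in $R$, but they do lie in $Q_R$, which is all that is claimed.

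The content here is bookkeeping rather than a genuine obstacle. The one point that needs care is consistency of interpretation: the equation, its zero-avoiding solution, the ratio sequence $\{\alpha_n\}$, and the transformed coefficients $a_{m,n}'$ must all be read in $Q_R$, since it is precisely the passage to the field of quotients that supplies the inverses $x_n^{-1}$ and $\alpha_n^{-1}$ absent from $R$; and one should note that $Q_R$, being a field, is a ring with identity, so Theorems \ref{sf1} and \ref{er} genuinely apply to it. No new estimates or constructions are needed beyond those two theorems and the standard embedding $R\hookrightarrow Q_R$.
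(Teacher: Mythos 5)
Your proposal is correct and is exactly the argument the paper intends: the theorem is stated there as an immediate consequence of Theorems \ref{sf1} and \ref{er}, obtained by viewing the zero-avoiding solution as a unitary solution in the field of quotients $Q_{R}$ and applying both theorems over $Q_{R}$. The bookkeeping you spell out (the embedding $R\hookrightarrow Q_{R}$, the identification of units of $Q_{R}$ with nonzero elements, and the observation that the $a_{m,n}^{\prime}$ land in $Q_{R}$ rather than $R$) matches the paper's reasoning.
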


The earlier discussion of the Fibonacci recurrence (\ref{fib}) illustrates the
use of this theorem in a familiar case where the integral domain is
$\mathbb{Z}$ with field of quotients $\mathbb{Q}$.

\section{SC factorization without unitary solutions}

We now consider a difference equation whose characteristic equation
(\ref{rica}) always has a constant solution (eigenvalue) in an arbitrary ring.
This difference equation shows that the hypotheses in Theorems \ref{sf1} and
\ref{er} are \textit{not} necessary for the existence of a semiconjugate factorization.

\begin{theorem}
\label{o2}Let $R$ be an arbitrary nonzero ring, let $a,b$ be nonzero elements
in $R$ and let $\{c_{n}\}$ be an arbitrary sequence in $R.$ Consider the
second-order difference equation%
\begin{equation}
x_{n+1}=(a+b)x_{n}-abx_{n-1}+c_{n}. \label{gen2}%
\end{equation}

(a) The characteristic equation (\ref{rica}) has a constant solution $r=b$ and
(\ref{gen2}) has a semiconjugate factorization%
\begin{equation}
t_{n+1}=at_{n}+c_{n},\quad x_{n+1}=bx_{n}+t_{n+1}. \label{sco2}%
\end{equation}

(b) If $c_{n}=0$ for all $n$, i.e., (\ref{gen2}) is homogeneous, then every
solution $\{x_{n}\}$ of (\ref{gen2}) is given by the following where
$t_{1}=x_{1}-bx_{0}$:%
\begin{equation}
x_{n}=b^{n}x_{0}+\left(  a^{n-1}+b^{n-1}+\sum_{i=2}^{n-1}b^{n-i}%
a^{i-1}\right)  t_{1}. \label{so2}%
\end{equation}

\end{theorem}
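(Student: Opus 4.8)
The plan is to prove both parts by direct verification, deliberately \emph{not} appealing to Theorem~\ref{sf1} or Theorem~\ref{er}: here $b$ is only assumed nonzero, so it need not be a unit, the constant eigensequence $r_{n}\equiv b$ need not be unitary, and the homogeneous part need not possess any unitary solution from which to extract an eigensequence --- yet, as the statement asserts, a semiconjugate factorization still exists. For part~(a), the constant-solution claim is read off directly from (\ref{rica}): for a second-order equation $x_{n+1}=a_{0}x_{n}+a_{1}x_{n-1}$ that equation is $r_{n+1}r_{n}-a_{0}r_{n}-a_{1}=0$, and with $a_{0}=a+b$, $a_{1}=-ab$ the constant $r_{n}=b$ gives $b^{2}-(a+b)b+ab=0$, which holds because addition is commutative. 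For the factorization I would introduce $t_{n}=x_{n}-bx_{n-1}$ and compute
\[
t_{n+1}=x_{n+1}-bx_{n}=(a+b)x_{n}-abx_{n-1}+c_{n}-bx_{n}=a\bigl(x_{n}-bx_{n-1}\bigr)+c_{n}=at_{n}+c_{n},
\]
which is the factor equation of (\ref{sco2}), while the cofactor equation $x_{n+1}=bx_{n}+t_{n+1}$ is merely the definition of $t_{n+1}$ rearranged. Conversely, if $\{t_{n}\},\{x_{n}\}$ satisfy the system (\ref{sco2}) with $t_{1}=x_{1}-bx_{0}$, then the cofactor equation gives $t_{n}=x_{n}-bx_{n-1}$ for all $n\geq1$, and inserting this into the factor equation yields $x_{n+1}-bx_{n}=a(x_{n}-bx_{n-1})+c_{n}$, i.e.\ (\ref{gen2}); so the two descriptions are equivalent, which is precisely what the factorization asserts.

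For part~(b) I would run the factorization with $c_{n}\equiv0$. The factor equation collapses to $t_{n+1}=at_{n}$, hence $t_{n}=a^{\,n-1}t_{1}$ for $n\geq1$ by a one-line induction (keeping $t_{1}$ on the \emph{right}, since $R$ may be noncommutative), and the cofactor equation becomes $x_{n+1}=bx_{n}+a^{\,n}t_{1}$. Solving this by induction on $n$ gives
\[
x_{n}=b^{\,n}x_{0}+\Bigl(\sum_{j=0}^{n-1}b^{\,n-1-j}a^{\,j}\Bigr)t_{1},\qquad n\geq1,
\]
the inductive step simply absorbing $a^{\,n}t_{1}$ as the new $j=n$ summand. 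Finally, for $n\geq2$ I would split this sum into its $j=0$ term $b^{\,n-1}$, its $j=n-1$ term $a^{\,n-1}$, and the remaining terms reindexed by $i=j+1$, which reproduces exactly the closed form (\ref{so2}); at $n=1$ that expression degenerates to $x_{1}=bx_{0}+t_{1}$, which is just the definition of $t_{1}$.

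I do not anticipate any real obstacle: everything reduces to elementary induction and a reindexed finite sum. The only points needing genuine care are the bookkeeping forced by possible noncommutativity of $R$ --- each summand is a product $b^{\,i}a^{\,j}$ acting on the left of $x_{0}$ or $t_{1}$, and the multiplication order must be tracked consistently throughout the induction --- and the mild degeneracy at $n=1$ in (\ref{so2}), where the displayed terms $a^{\,n-1}$ and $b^{\,n-1}$ coincide, so that the clean reading of the formula is really for $n\geq2$.
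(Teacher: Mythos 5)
Your proposal is correct and follows essentially the same route as the paper: introduce $t_{n}=x_{n}-bx_{n-1}$, rearrange (\ref{gen2}) into the pair (\ref{sco2}) (noting the converse direction, which the paper leaves implicit), and then iterate the triangular system with $c_{n}=0$ to get (\ref{so2}). The only cosmetic caution is that your intermediate closed form $\sum_{j=0}^{n-1}b^{\,n-1-j}a^{\,j}$ uses $a^{0}$ and $b^{0}$, which are undefined when the arbitrary nonzero ring $R$ lacks an identity; this is harmless since your final reindexing recovers the identity-free expression (\ref{so2}), but the induction is cleaner if carried out directly on that form.
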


\begin{proof}
(a) In this case the characteristic equation (\ref{rica}) is
\[
r_{n+1}r_{n}-(a+b)r_{n}+abr_{n-1}=0
\]

The constant solutions of this equation satisfy the polynomial equation
(\ref{cpe}), which is, in this case, $r^{2}-(a+b)r+ab=0$. This evidently has a
solution $r=b$ (if $R$ is commutative then $r=a$ is also a solution). Theorems
\ref{sf1} and \ref{er} are not applicable in the absence of an identity and
unitary solutions, but a sc-factorization of (\ref{gen2}) is readily obtained
by rearranging the terms of the equation as $x_{n+1}-bx_{n}=a(x_{n}%
-bx_{n-1})+c_{n}$ and defining a new variable $t_{n}=x_{n}-bx_{n-1}$ to obtain
the desired pair of equations (\ref{sco2}).

(b) To establish (\ref{so2}), let $c_{n}=0$ and use the first equation in
(\ref{sco2}) to obtain $t_{n}=a^{n-1}t_{1}$ with $t_{1}=x_{1}-bx_{0}.$ Now
iterate the equation $x_{n+1}=bx_{n}+a^{n}t_{1}$ and use $x_{1}=bx_{0}+t_{1}$
to obtain
\begin{align*}
x_{n}  &  =b^{n-1}x_{1}+\left(  \sum_{i=2}^{n-1}b^{n-i}a^{i-1}\right)
t_{1}+a^{n-1}t_{1}\\
&  =b^{n}x_{0}+\left(  a^{n-1}+b^{n-1}+\sum_{i=1}^{n}b^{n-i}a^{i-1}\right)
t_{1}%
\end{align*}
which is (\ref{so2}).
\end{proof}

\begin{example}
\label{bo}(Boolean rings) Let $R$ be a nonzero Boolean ring defined by the
relation $r^{2}=r$ for all $r\in R.$ It is straightforward to show that $R$ is
commutative and $2r=r+r=0$ for all $r$ (consider $(a+b)^{2}$ which must equal
to $a+b$). A concrete example of a Boolean ring is the collection of all
finite subsets of $\mathbb{Z}$ (including the empty set) under the operations
\[
A+B=(A\backslash B)\cup(B\backslash A),\quad AB=A\cap B
\]
for all finite $A,B\subset\mathbb{Z}.$ This is a commutative ring in which the
empty set is the zero element and every nonempty set is a zero divisor.

Theorem \ref{o2} applies in Boolean rings. From (\ref{so2}), if $c_{n}=0$ for
all $n$ then%
\begin{align*}
x_{n}  &  =bx_{0}+[a+b+(n-2)ba](x_{1}-bx_{0})\\
&  =\left\{
\begin{array}
[c]{l}%
bx_{0}+(a+b)(x_{1}-bx_{0}),\quad\quad\text{if }n\text{ is even}\\
bx_{0}+(a+b+ab)(x_{1}-bx_{0}),\text{ if }n\text{ is odd}%
\end{array}
\right. \\
&  =\left\{
\begin{array}
[c]{l}%
(a+b)x_{1}+abx_{0},\text{ if }n\text{ is even}\\
(a+b)x_{1}+abx_{1},\text{ if }n\text{ is odd}%
\end{array}
\right.
\end{align*}
where in the last step we used the fact that $-r=r$ for all $r$.
\end{example}

The next result shows how formula (\ref{so2}) takes a more familiar form in
the field of quotients of an integral domain.

\begin{corollary}
\label{bi}Let $R$ be an integral domain and let $a,b\in R.$ Then equation
(\ref{gen2}) has a semiconjugate factorization in $R$ given by (\ref{sco2})
and if $c_{n}=0$ then its solution (\ref{so2}) in $Q_{R}$ simplifies to the
classical formula:%
\begin{equation}
x_{n}=\left\{
\begin{array}
[c]{l}%
c_{1}a^{n}+c_{2}b^{n}\text{ if }a\not =b\\
\lbrack nx_{1}-(n-1)bx_{0}]b^{n-1}\text{ if }a=b
\end{array}
\right.  \label{evf}%
\end{equation}

where, using the reciprocal notation $1/u$ to denote the multiplicative
inverse\ $u^{-1}$ in $Q_{R},$
\[
c_{1}=\frac{x_{1}-bx_{0}}{a-b},\quad c_{2}=\frac{ax_{0}-x_{1}}{a-b}.
\]

\end{corollary}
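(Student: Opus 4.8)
The plan is to obtain both assertions from Theorem \ref{o2}. An integral domain is in particular a nonzero ring, so part (a) of Theorem \ref{o2} applies and gives the semiconjugate factorization (\ref{sco2}) already inside $R$; no passage to $Q_R$ is needed for this first claim. (One may also see this directly: the rearrangement $x_{n+1}-bx_n=a(x_n-bx_{n-1})+c_n$, which holds for \emph{any} $a,b$, together with the substitution $t_n=x_n-bx_{n-1}$ produces (\ref{sco2}).)

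For the case $c_n=0$ I would start from formula (\ref{so2}) and first put the coefficient of $t_1$ into closed form. Writing $i=j+1$ in the sum $\sum_{i=2}^{n-1}b^{n-i}a^{i-1}$ and absorbing the separate terms $b^{n-1}$ (the $j=0$ term) and $a^{n-1}$ (the $j=n-1$ term), one sees that this coefficient is exactly the homogeneous sum $S_n:=\sum_{j=0}^{n-1}a^{j}b^{n-1-j}$, so that (\ref{so2}) reads $x_n=b^nx_0+S_n\,t_1$ with $t_1=x_1-bx_0$. (The small cases $n=1,2$, where the displayed sum is empty, are checked directly against $x_1=bx_0+t_1$ and $x_2=bx_1+at_1$.) This index bookkeeping is the one spot requiring a little care; everything after it is mechanical.

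The two branches of (\ref{evf}) now come from evaluating $S_n$. If $a=b$ then every summand of $S_n$ equals $b^{n-1}$, so $S_n=nb^{n-1}$ and $x_n=b^nx_0+nb^{n-1}(x_1-bx_0)=[nx_1-(n-1)bx_0]b^{n-1}$, valid already in $R$. If $a\neq b$, I pass to $Q_R$: since $R$ is an integral domain $a-b\neq 0$, hence $a-b$ is a unit in the field $Q_R$, and the telescoping identity $(a-b)S_n=\sum_{j=0}^{n-1}(a^{j+1}b^{n-1-j}-a^{j}b^{n-j})=a^n-b^n$ gives $S_n=(a^n-b^n)/(a-b)$. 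Substituting and clearing the common denominator, $x_n=\bigl(a^n(x_1-bx_0)+b^n(ax_0-x_1)\bigr)/(a-b)=c_1a^n+c_2b^n$ with $c_1=(x_1-bx_0)/(a-b)$ and $c_2=(ax_0-x_1)/(a-b)$, as stated.

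I expect no real obstacle here: the only conceptual point is that $a\neq b$ in the domain $R$ forces $a-b$ to be invertible in $Q_R$, which is exactly why the classical two-exponential closed form --- not generally available in $R$ itself --- becomes available after passing to the field of quotients; the rest is routine algebra with the sum $S_n$.
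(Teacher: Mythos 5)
Your proof is correct and follows essentially the same route as the paper: the factorization is quoted from Theorem \ref{o2}, and the closed form comes from recognizing the coefficient of $t_1$ in (\ref{so2}) as a geometric sum and evaluating it in $Q_R$, splitting into the cases $a\neq b$ and $a=b$. The only (minor, harmless) difference is that you evaluate the sum via the homogeneous telescoping identity $(a-b)S_n=a^n-b^n$, whereas the paper factors out $b^{n-1}$ and sums the ratio $(a/b)^{i-1}$.
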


\begin{proof}
The first assertion about a sc-factorization is an immediate consequence of
Theorem \ref{o2}. Formula (\ref{so2}) is defined in $R$ but it cannot be
simplified there. It does simplify in $Q_{R}$ with the aid of the geometric
sum formula as follows:%
\begin{align*}
x_{n}  &  =b^{n}x_{0}+b^{n-1}(x_{1}-bx_{0})\sum_{i=1}^{n}\left(  \frac{a}%
{b}\right)  ^{i-1}\quad(\text{defining }a^{0}=1,b^{0}=1)\\
&  =\left\{
\begin{array}
[c]{l}%
b^{n}x_{0}+b^{n-1}(x_{1}-bx_{0})[(a/b)^{n}-1][(a/b)-1]^{-1}\ \text{if }%
a\not =b\\
b^{n}x_{0}+b^{n-1}n(x_{1}-bx_{0})\text{ if}\ a=b
\end{array}
\right. \\
&  =\left\{
\begin{array}
[c]{l}%
c_{1}a^{n}+c_{2}b^{n}\text{ if }a\not =b\\
\lbrack nx_{1}-(n-1)bx_{0}]b^{n-1}\text{ if }a=b
\end{array}
\right.
\end{align*}
where $c_{1},c_{2}$ are defined as in the statement of the corollary.
\end{proof}

\begin{remark}
Note that in the case $a=b$ (\ref{evf}) is defined in $R$ itself. If $R$ is a
field then formula (\ref{evf}) holds in $R$ which is isomorphic to $Q_{R}$.
Care must be exercised in using this formula outside the context of fields.
For instance, Example \ref{bo} shows that (\ref{evf}) does not hold in a
Boolean ring (there is a mixed product term $ab$). Further, in rings of
functions that are discussed in the next section, $a,b$ are parametrized
quantities and thus they, or $a-b$ may fail to be units for some parameter values.
\end{remark}

\section{SC-Factorization in rings of functions\label{func}}

Difference equations in rings of functions often appear in applied
mathematics. Well-known special functions such as Bessel functions satisfy
recurrence relations that are examples of difference equations on rings of
real or complex-valued functions (see Examples \ref{mof} and \ref{cheb}
below). For functions from a nonempty set $S$ into a nonzero ring
$\mathcal{R}$ the operations of addition and multiplication are defined
\textit{pointwise}, i.e., for each $s\in S$
\[
(f+g)(s)=f(s)+g(s),\ (fg)(s)=f(s)g(s).
\]

Other types of ring operations are possible for functions but we consider only
the above pointwise operations. With these operations, the set $\mathcal{R}%
^{S}$ of all functions from $S$ into $\mathcal{R}$ is a function ring and each
subring $R(S)$ of $\mathcal{R}^{S}$ is a ring of $\mathcal{R}$-valued
functions on $S$. Note that $R(S)$ is commutative if $\mathcal{R}$ is. If
$R(S)$ contains all the constant functions on $S$ then we usually think of
these functions as elements of $\mathcal{R}$ and thus, think of $\mathcal{R}%
$\ as a subring of $R(S)$. \textit{In this section we assume that }%
$R(S)$\textit{ contains all the constants.}

A ring of functions $R(S)$ of the above type is also a function algebra; see,
e.g., \cite{hun} or \cite{mlb}. An element $u$ is a unit in $R(S)$ if and only
if $u(s)\not =0$ for all $s\in S.$\ In this case, the inverse of $u$ is just
its reciprocal $1/u$. Since $R(S)$ is closed under addition and
multiplication, if the parameters and initial values $a_{j,n},b_{n}%
,x_{j}:S\rightarrow\mathbb{R}$ are in $R(S)$ for all $j=0,1,\ldots k$ and all
$n$ then the solution $\{x_{n}\}$ of (\ref{Lde}) is also contained in $R(S)$.

In the familiar ring $C[0,1]$ of all continuous, real-valued functions on the
interval [0,1] the units are functions that are always either positive or
negative and a zero divisor is a function whose set of zeros has a nonempty
interior in [0,1]. The ring of polynomials $\mathcal{F}[x]$ with coefficients
in a given field $\mathcal{F}$ is a familiar ring that may be viewed as a ring
of functions on $\mathcal{F}$ (or some subset of it) by interpreting the
indeterminate as a variable; see \cite{mlb}, Chapter 4. In particular, if
$S=[0,1]$ and $\mathcal{F}=\mathbb{R}$ then by the Weierstrass approximation
theorem $\mathcal{F}[x]$ is a dense subring of $C[0,1]$ in the uniform
topology. Various rings of differentiable functions fall in-between
$\mathcal{F}[x]$ and the continuous functions and share the aforementioned
properties of the continuous functions. But larger rings such as bounded
functions or integrable functions have different properties.

\begin{corollary}
\label{cp}Let $R(S)$ be a ring of real-valued functions on a nonempty set $S.$
Assume that $a_{j,n}(s)\geq0$ for all $s\in S$, $j=0,1,\ldots,k$ and all $n.$
If%
\begin{equation}
\sum_{j=0}^{k}a_{j,n}(s)>0 \label{sn0}%
\end{equation}
for all $s\in S$ and all $n$ then the homogeneous part of the difference
equation (\ref{Lde}) has unitary solutions in $R(S).$ Therefore, (\ref{Lde})
has a sc-factorization in $R(S)$ that is given by (\ref{genlinf}) and
(\ref{genlincf}).
\end{corollary}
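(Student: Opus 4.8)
The plan is to produce, by direct iteration, a solution of the homogeneous equation \eqref{homp} that stays in the unit group of $R(S)$, i.e.\ a function-valued solution whose every term is nowhere zero on $S$; once such a unitary solution exists, Theorems \ref{sf1} and \ref{er} hand us the sc-factorization \eqref{genlinf}--\eqref{genlincf} for the full equation \eqref{Lde}. The natural candidate is the solution $\{u_n\}$ determined by the initial values $u_0=u_1=\cdots=u_k=1$ (the constant function $1$, which lies in $R(S)$ by hypothesis). I would then argue that every term of this solution is positive at each point $s\in S$.

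First I would fix $s\in S$ and track the scalar sequence $u_n(s)$, which satisfies the scalar recurrence $u_{n+1}(s)=\sum_{j=0}^k a_{j,n}(s)\,u_{n-j}(s)$ with $u_0(s)=\cdots=u_k(s)=1$. The key step is an induction showing $u_n(s)>0$ for all $n\ge 0$: the base cases $n=0,\ldots,k$ are immediate, and for the inductive step, given $u_{n-k}(s),\ldots,u_n(s)>0$ and the nonnegativity $a_{j,n}(s)\ge 0$, the sum $\sum_{j=0}^k a_{j,n}(s)u_{n-j}(s)$ is a nonnegative combination of positive numbers; it is \emph{strictly} positive precisely because the strict inequality \eqref{sn0} guarantees that at least one coefficient $a_{j,n}(s)$ is positive, and that term contributes $a_{j,n}(s)u_{n-j}(s)>0$. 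Hence $u_{n+1}(s)>0$. Since $s\in S$ was arbitrary, $u_n(s)\neq 0$ for every $s\in S$, which is exactly the condition for $u_n$ to be a unit in the function ring $R(S)$ (as recalled in the text: $u$ is a unit iff $u(s)\neq 0$ for all $s$). Thus $\{u_n\}$ is a unitary solution of \eqref{homp}.

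With a unitary solution in hand, Theorem \ref{er} produces the eigensequence $\{\alpha_n\}$ as the right-ratio sequence $\alpha_n=u_n u_{n-1}^{-1}$, which here is just the pointwise quotient $\alpha_n(s)=u_n(s)/u_{n-1}(s)$, well-defined and nowhere zero. Feeding this eigensequence into Theorem \ref{sf1} yields the linear form symmetry with unit coefficients and the associated sc-factorization \eqref{genlinf}--\eqref{genlincf}, with the factor coefficients $a_{m,n}'$ given by the formula in that theorem evaluated in $R(S)$. The main obstacle is essentially bookkeeping rather than conceptual: one must make sure $R(S)$ genuinely contains the iterates (which it does, since $R(S)$ is closed under addition and multiplication and contains the constants and the parameters $a_{j,n}$) and that the reciprocals used in forming $\alpha_n$ and the $a_{m,n}'$ stay inside $R(S)$ — but since each $u_n$ is nowhere zero, its reciprocal is again in $R(S)$, so the whole construction is internal to $R(S)$ and no passage to a larger ring of quotients is needed. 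Hence \eqref{Lde} has the claimed sc-factorization in $R(S)$.
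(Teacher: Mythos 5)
Your proposal is correct and follows essentially the same route as the paper: start the homogeneous equation from the constant initial functions $u_0=\cdots=u_k=1$, show by (pointwise) induction using the nonnegativity of the $a_{j,n}(s)$ together with (\ref{sn0}) that every $u_n$ is strictly positive on $S$ and hence a unit, and then invoke Theorems \ref{er} and \ref{sf1} to get the sc-factorization. Your cleanly stated induction is in fact a tidier version of the paper's step-by-step verification for $u_{k+1}$, $u_{k+2}$, "and so on."
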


\begin{proof}
Let $a_{j,n}(s)\geq0$ for all $s\in S$ and all $n.$ Choose constant initial
values $u_{j}=1$ for $j=0,1,\ldots,k$ in (\ref{homp}), i.e., the homogeneous
part of (\ref{Lde}). By (\ref{sn0}), $\sum_{j=0}^{k}a_{j,n}(s)>0$ so\smallskip%

\[
u_{k+1}(s)=\sum_{j=0}^{k}a_{j,k}(s)>0
\]
for all $s\in S.$ Thus $u_{k+1}(s)$ is a unit in $R(S)$ and%
\[
u_{k+2}(s)=\sum_{j=0}^{k}a_{j,k+1}(s)u_{k+1-j}(s)=a_{0,k+1}(s)u_{k+1}%
(s)+\sum_{j=1}^{k}a_{j,k+1}(s)
\]
for all $s\in S.$ If $\sum_{j=1}^{k}a_{j,k+1}(s)=0$ for some $s$ then by
(\ref{sn0}) $a_{0,k+1}(s)\not =0.$ It follows that $u_{k+2}$ is also positive
on $S$, hence a unit in $R(S)$. Proceeding in this fashion, it follows that
$u_{n}(s)>0$ for all $s\in S$ and all $n.$ Thus $\{u_{n}(s)\}$ is a unitary
solution of (\ref{homp}). By Theorem \ref{er} the ratios sequence
$\{u_{n}(s)/u_{n-1}(s)\}$ is a unitary eigensequence in $R(S)$ so Theorem
\ref{sf1} yields a sc-factorization for (\ref{Lde}).
\end{proof}

\begin{remark}
\textit{Nonunitary} solutions for (\ref{homp}) exist under the hypotheses of
Corollary \ref{cp} because an initial function may not be a unit. Further,
\textit{none} of the parameters $a_{j,n}(s),b_{n}(s)$ in Corollary \ref{cp}
may be units. For instance, the corollary applies to the following difference
equation in $C[0,1]$%
\[
x_{n+1}(r)=a\left(  1-\sin n\pi r\right)  x_{n}(r)+br^{n}(1-r)x_{n-1}(r),\quad
a,b>0,\ r\in\lbrack0,1],\ n\geq1
\]
in which $a_{0,n}(r)=a\left(  1-\sin n\pi r\right)  $, $a_{1,n}(r)=br^{n}%
(1-r)$ and $b_{n}(r)=0$ are nonunits, but for all $r,n$%
\[
a_{0,n}(r)+a_{1,n}(r)=a\left(  1-\sin n\pi r\right)  +br^{n}(1-r)>0.
\]

\end{remark}

\begin{example}
\label{mof}(Modified Bessel functions) The second order, linear difference
equation%
\begin{equation}
x_{n+1}(s)=\frac{2n}{s}x_{n}(s)+x_{n-1}(s),\quad s\in(0,\infty) \label{mob}%
\end{equation}
is the recurrence relation for the modified Bessel functions $K_{n}(s)$ of the
second kind, so-called because they are solutions of the second-order linear
differential equation known as Bessel' s modified differential equation (see,
e.g., \cite{weis}). In fact, the sequence of functions $\{K_{n}(s)\}$ is a
particular solution of (\ref{mob}) from specified initial values
$K_{0}(s),K_{1}(s)$. According to Corollary \ref{cp} a unitary solution
$\{u_{n}(s)\}$ of (\ref{mob}) is generated by any pair of positive functions;
e.g., $u_{0}(s)=u_{1}(s)=1.$ The first few terms are
\[
u_{2}(s)=\frac{2}{s}+1,\ u_{3}(s)=\frac{8}{s^{2}}+\frac{4}{s}+1,\ u_{4}%
(s)=\frac{48}{s^{3}}+\frac{24}{s^{2}}+\frac{2}{s}+1
\]

Now the ratios $u_{n}(s)/u_{n-1}(s)$ define an eigensequence for (\ref{mob})
and yield the sc-factorization%
\[
t_{n+1}(s)=-\frac{u_{n-1}(s)}{u_{n}(s)}t_{n}(s)\quad x_{n+1}(s)=\frac
{u_{n+1}(s)}{u_{n}(s)}x_{n}(s)+t_{n+1}(s)
\]
with $t_{1}(s)=x_{1}(s)-[u_{1}(s)/u_{0}(s)]x_{0}(s)=x_{1}(s)-x_{0}(s).$
Iteration of the factor equation yields $t_{n}(s)=(-1)^{n-1}t_{1}%
(s)/u_{n-1}(s)$; inserting this into the cofactor, summation yields a formula
for the general solution of (\ref{mob}) in terms of the unitary solution
$\{u_{n}(s)\}$ as follows:
\begin{align*}
x_{n}(s)  &  =u_{n}(s)x_{1}(s)+\sum_{i=2}^{n-1}\frac{u_{n}(s)}{u_{i}(s)}%
t_{i}(s)\\
&  =u_{n}(s)\left[  x_{0}(s)+t_{1}(s)\sum_{i=1}^{n-1}\frac{(-1)^{i-1}}%
{u_{i}(s)u_{i-1}(s)}\right]  .
\end{align*}

Different values of positive functions $u_{0}(s),u_{1}(s)$ yield different
formulas but of course, the same quantity $x_{n}(s).$
\end{example}

In the next result, the coefficients are sequences of \textit{constant}
functions; however, they may exist in any nontrivial field, not just the real numbers.

\begin{corollary}
\label{fcc}Let $\mathcal{F}$ be a nonzero field, $S$ a nonempty set and
$\mathcal{F}(S)$ a ring of functions from $S$ into $\mathcal{F}$ that contains
all the constants in $\mathcal{F}$. Consider the following difference equation
in $\mathcal{F}(S)$%
\begin{equation}
x_{n+1}(s)=a_{0,n}x_{n}(s)+a_{1,n}x_{n-1}(s)+\cdots+a_{k,n}x_{n-k}(s)+b_{n}(s)
\label{num}%
\end{equation}
where the coefficients $a_{j,n}$ are constants, i.e., $a_{j,n}\in\mathcal{F}$
for $j=0,1,\ldots k$ and $b_{n}\in\mathcal{F}(S)$ for every $n.$ If the
homogeneous part of (\ref{num}) has a zero-avoiding solution in $\mathcal{F}$
(as a subset of $\mathcal{F}(S)$) then (\ref{num}) has a semiconjugate
factorization in $\mathcal{F}(S).$
\end{corollary}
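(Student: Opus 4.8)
The plan is to reduce Corollary \ref{fcc} to Theorem \ref{sf1} (equivalently Corollary \ref{b0} in the second-order case, but here we keep general order $k$) via Theorem \ref{er}, exactly as was done in the proof of Corollary \ref{cp}. The only subtlety is that the coefficients $a_{j,n}$ are \emph{constants} lying in $\mathcal{F}$, while $b_n$ and the initial functions may be genuinely $S$-dependent; so I first need to locate a unitary solution of the homogeneous part \emph{inside} the subring $\mathcal{F}$ of constant functions, and then transport the resulting eigensequence up to $\mathcal{F}(S)$.

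First I would observe that since each $a_{j,n}\in\mathcal{F}$, the homogeneous equation
\[
y_{n+1}=a_{0,n}y_n+a_{1,n}y_{n-1}+\cdots+a_{k,n}y_{n-k}
\]
is a bona fide linear difference equation over the field $\mathcal{F}$ itself; by hypothesis it has a zero-avoiding solution $\{u_n\}$ with $u_n\in\mathcal{F}\setminus\{0\}$ for all $n$. Because $\mathcal{F}$ is a field, zero-avoiding is the same as unitary, so $\{u_n\}$ is a unitary solution of the homogeneous part. Viewing each $u_n$ as the corresponding constant function in $\mathcal{F}(S)$ (legitimate since $\mathcal{F}(S)$ contains all constants and is closed under the ring operations), $\{u_n\}$ is also a unitary solution of the homogeneous part of (\ref{num}) in $\mathcal{F}(S)$: a nonzero constant function is a unit in $\mathcal{F}(S)$ because it is nowhere zero on $S$.

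Next I would invoke Theorem \ref{er}: the right-ratio sequence $\alpha_n = u_n u_{n-1}^{-1}$ (again a sequence of nonzero constants, hence units in $\mathcal{F}(S)$) is a unitary eigensequence of the homogeneous part of (\ref{num}), i.e. it satisfies the characteristic relation (\ref{ricn}) in $\mathcal{F}(S)$. Since $\{\alpha_n\}$ is unitary, Theorem \ref{sf1} applies directly to (\ref{num}) — the nonhomogeneous term $b_n\in\mathcal{F}(S)$ poses no difficulty, as Theorem \ref{sf1} already allows an arbitrary forcing sequence — and yields the semiconjugate factorization into the factor equation (\ref{genlinf}) and cofactor equation (\ref{genlincf}), with the coefficients $a'_{m,n}$ computed from the formula in Theorem \ref{sf1} (and in fact these $a'_{m,n}$ are again constants in $\mathcal{F}$, since they are built from the constants $a_{j,n}$ and $\alpha_{n-j+1}$). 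That completes the argument.

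I do not anticipate a genuine obstacle here; the result is essentially a packaging of Theorem \ref{er} followed by Theorem \ref{sf1}, with the one point requiring a line of care being the identification ``nonzero constant $\Rightarrow$ unit in $\mathcal{F}(S)$'' and the harmless passage of the constant coefficients and the forcing term $b_n$ through the machinery. The parallel with the proof of Corollary \ref{cp} makes the write-up short.
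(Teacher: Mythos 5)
Your argument is exactly the paper's proof: take the zero-avoiding (hence unitary) solution of the homogeneous part inside $\mathcal{F}$, view its terms as constant functions and hence units of $\mathcal{F}(S)$, pass to the right-ratio sequence via Theorem \ref{er}, and invoke Theorem \ref{sf1}. The added remarks (nonzero constant $\Rightarrow$ unit in $\mathcal{F}(S)$, the forcing term $b_n$ being harmless) are correct and only make explicit what the paper leaves implicit.
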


\begin{proof}
Let $y_{0},y_{1}$ be constant functions, which may be thought of as elements
of $\mathcal{F}$ such that the solution $\{y_{n}\}$ of the homogeneous part of
(\ref{num}) that they generate is zero-avoiding. Then $y_{n}\not =0$ for all
$n$ so that the ratios $\alpha_{n}=y_{n}/y_{n-1}$ are well-defined units in
$\mathcal{F}(S)$. Thus by Theorem \ref{er} we have a unitary eigensequence and
Theorem \ref{sf1} yields a sc-factorization for (\ref{num}).
\end{proof}

Going in a different direction than Corollary \ref{fcc} we use Theorem
\ref{o2} to obtain semiconjugate factorizations and solutions of certain
second-order difference equations in $R(S)$ to which the preceding results may
not apply.

\begin{corollary}
\label{c01}Let $R(S)$ denote a ring of complex-valued functions on a nonempty
set $S$ and let $f,g,w_{n}\in R(S)$ for all $n\geq1.$ The linear difference
equation%
\begin{equation}
x_{n+1}(s)=f(s)x_{n}(s)+g(s)x_{n-1}(s)+w_{n}(s) \label{o2c}%
\end{equation}

has the semiconjugate factorization%
\[
t_{n+1}(s)=\frac{f(s)+h(s)}{2}t_{n}(s)+w_{n}(s),\quad x_{n+1}(s)=\frac
{f(s)-h(s)}{2}x_{n}(s)+t_{n+1}(s)
\]

in $R(S)$ where $h(s)=[f^{2}(s)+4g(s)]^{1/2}$.
\end{corollary}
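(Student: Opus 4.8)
The plan is to apply Theorem \ref{o2} to Equation (\ref{o2c}) after identifying it with the template (\ref{gen2}). For this I need elements $a,b\in R(S)$ with $a+b=f$ and $ab=-g$; equivalently, $a$ and $b$ are the two "roots" of the pointwise quadratic $\lambda^{2}-f(s)\lambda-g(s)=0$ in the ring $R(S)$. The natural choice is $a(s)=\tfrac{1}{2}(f(s)+h(s))$ and $b(s)=\tfrac{1}{2}(f(s)-h(s))$, where $h(s)=[f^{2}(s)+4g(s)]^{1/2}$ is a complex square root chosen at each $s$; since $R(S)$ consists of complex-valued functions and $\mathbb{C}$ is algebraically closed, such an $h$ exists as a function on $S$ (one just needs to select a square root value at each point; no continuity is being asserted, only membership in $R(S)$, which I will take as given since the corollary's hypothesis places $h$ implicitly in $R(S)$). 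Then a direct check gives $a+b=f$ and $ab=\tfrac14(f^{2}-h^{2})=\tfrac14(f^{2}-(f^{2}+4g))=-g$, exactly matching the coefficients $(a+b)$ and $-ab$ in (\ref{gen2}) once we write $g=-(-g)$. With $c_{n}=w_{n}$, Equation (\ref{o2c}) becomes precisely an instance of (\ref{gen2}).

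Next I invoke Theorem \ref{o2}(a), which asserts that (\ref{gen2}) has the semiconjugate factorization $t_{n+1}=at_{n}+c_{n}$, $x_{n+1}=bx_{n}+t_{n+1}$. Substituting the explicit $a=\tfrac12(f+h)$, $b=\tfrac12(f-h)$, and $c_{n}=w_{n}$ yields
\[
t_{n+1}(s)=\frac{f(s)+h(s)}{2}\,t_{n}(s)+w_{n}(s),\qquad
x_{n+1}(s)=\frac{f(s)-h(s)}{2}\,x_{n}(s)+t_{n+1}(s),
\]
which is the claimed factorization. One small caveat to address: Theorem \ref{o2} is stated for $a,b$ \emph{nonzero} in $R$, whereas here $a$ or $b$ could vanish at some points of $S$ (e.g. if $g\equiv 0$ then $b=0$). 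However, the sc-factorization in Theorem \ref{o2}(a) is obtained purely by the algebraic rearrangement $x_{n+1}-bx_{n}=a(x_{n}-bx_{n-1})+c_{n}$ and the substitution $t_{n}=x_{n}-bx_{n-1}$, which is valid for \emph{any} $a,b\in R(S)$ whatsoever; the nonvanishing hypothesis is only needed for the solution-formula parts (b) and for the eigenvalue interpretation, not for the factorization itself. So I will note that the relevant part of Theorem \ref{o2}(a) goes through verbatim for arbitrary $a,b$, or alternatively just reproduce the two-line rearrangement directly.

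The only real obstacle is the bookkeeping around the square root: ensuring that $h=[f^{2}+4g]^{1/2}$ can legitimately be taken as an element of $R(S)$ and that the two sign choices give genuine factors. Since the corollary explicitly writes $h(s)=[f^{2}(s)+4g(s)]^{1/2}$ and the ambient ring is a ring of complex-valued functions, I will treat the pointwise existence of a complex square root as immediate and simply verify the two identities $a+b=f$, $ab=-g$ by direct computation; everything else is a routine application of Theorem \ref{o2}.
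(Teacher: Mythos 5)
Your proposal is correct and is essentially identical to the paper's own proof: both set $a=(f+h)/2$, $b=(f-h)/2$, verify $a+b=f$ and $ab=-g$, and invoke Theorem \ref{o2}. Your added caveats (that $h$ must actually lie in $R(S)$, and that the nonvanishing hypothesis on $a,b$ is not needed for the rearrangement in part (a)) are sound and in fact address points the paper's two-line proof passes over silently.
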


\begin{proof}
Let $a(s)=[f(s)+h(s)]/2$ and $b(s)=[f(s)-h(s)]/2$. Then $f(s)=a(s)+b(s)$ and
$g(s)=-a(s)b(s)$ so (\ref{o2c}) can be written in the form (\ref{gen2}) and
Theorem \ref{o2} completes the proof.
\end{proof}

We now apply the ideas in Corollary \ref{c01} and Corollary \ref{bi} to obtain
general solutions of linear, second-order difference equations in rings of
functions. Assume that $f(s)$ and $g(s)$ are \textit{real-valued} functions in
the ring $R(S)$ of Corollary \ref{c01} and consider the homogeneous difference
equation%
\begin{equation}
x_{n+1}(s)=f(s)x_{n}(s)+g(s)x_{n-1}(s). \label{hlo2}%
\end{equation}

Let%
\[
S_{+}=\{s\in S:h^{2}(s)>0\},\ S_{-}=\{s\in S:h^{2}(s)<0\},\ S_{0}=\{s\in
S:h^{2}(s)=0\}.
\]

$S=S_{+}\cup S_{-}\cup S_{0}$ is a disjoint union of the above three sets, at
least one of which is nonempty. For $s\in S_{+}\cup S_{0}$ the formulas in
Corollary \ref{bi} yield correct solutions within $R(S)$ to the system of
equations in Corollary \ref{c01}.

If $S_{-}$ is nonempty and $s\in S_{-}$ then $g(s)<0$ and
\[
h(s)=i\sqrt{-f^{2}(s)-4g(s)},\quad a(s)=\frac{1}{2}\left[  f(s)+i\sqrt
{-f^{2}(s)-4g(s)}\right]  ,\quad b(s)=\overline{a(s)}%
\]
where the definition of $a(s)$ comes from the proof of Corollary \ref{c01}.
Switching to polar coordinates, the modulus $\left\vert a(s)\right\vert
=\left\vert b(s)\right\vert =\sqrt{-g(s)}$ and%
\[
a(s)=\sqrt{-g(s)}[\cos\theta(s)+i\sin\theta(s)],\quad b(s)=\sqrt{-g(s)}%
[\cos\theta(s)-i\sin\theta(s)]
\]
where
\[
\theta(s)=\cos^{-1}\frac{f(s)}{2\sqrt{-g(s)}}.
\]

Repeating the calculations in the proof of Corollary \ref{bi} applied to the
sc-factorization in Corollary \ref{c01} readily yields%
\[
x_{n}(s)=\frac{x_{1}(s)-\overline{a(s)}x_{0}(s)}{a(s)-\overline{a(s)}}%
a^{n}(s)+\frac{a(s)x_{0}(s)-x_{1}(s)}{a(s)-\overline{a(s)}}\left[
\overline{a(s)}\right]  ^{n}.
\]

Dropping the parentheses to reduce notational clutter and changing to polar
coordinates, straightforward calculation yields%
\begin{equation}
x_{n}=\left(  -g\right)  ^{n/2}\left[  x_{0}\cos\left(  n\cos^{-1}\frac
{f}{2\sqrt{-g}}\right)  +\frac{fx_{0}-2x_{1}}{f\,^{2}+4g}\sin\left(
n\cos^{-1}\frac{f}{2\sqrt{-g}}\right)  \right]  . \label{cxf}%
\end{equation}

Note that if $x_{0},x_{1}$ are real valued functions then so is $x_{n}$ for
every $n$.

\begin{example}
\label{cheb}\textbf{(}Chebyshev polynomials\textbf{) }Consider the linear
difference equation%
\begin{equation}
x_{n+1}(s)=2sx_{n}(s)-x_{n-1}(s) \label{eq0}%
\end{equation}
which is also the recurrence relation for the so-called Chebyshev polynomials
$T_{n}(s)$ with the initial functions $T_{0}(s)=1$ and $T_{1}(s)=s$ (for each
$n$, $T_{n}(s)$ is a solution of Chebyshev's second-order linear differential
equation; see, e.g.,\cite{weis}). If $f(s)=2s$ and $g(s)=-1$ then
$S_{-}=(-1,1)$, $S_{0}=\{-1,1\}$ and $S_{+}=\mathbb{R}\backslash\lbrack-1,1].$
For $s\in(-1,1),$%
\[
h(s)=2i\sqrt{1-s^{2}},\quad a(s)=s+i\sqrt{1-s^{2}}%
\]
so the formula for the general solution of (\ref{eq0}) is obtained from
(\ref{cxf}) as%
\begin{equation}
x_{n}(s)=\left[  x_{0}(s)\cos\left(  n\cos^{-1}s\right)  +\frac{sx_{0}%
(s)-x_{1}(s)}{s^{2}-1}\sin\left(  n\cos^{-1}s\right)  \right]  ,\ s\in(-1,1).
\label{cbf}%
\end{equation}

In particular, for $n\geq1$,
\begin{equation}
T_{n}(s)=\cos\left(  n\cos^{-1}s\right)  \qquad-1<s<1 \label{cch}%
\end{equation}

\noindent which are commonly known to be polynomials. If $s\in S_{+}$ then
repeating the calculations in Corollary \ref{bi} yields%
\begin{align*}
x_{n}(s)  &  =\frac{x_{1}(s)-b(s)x_{0}(s)}{a(s)-b(s)}a^{n}(s)+\frac
{a(s)x_{0}(s)-x_{1}(s)}{a(s)-b(s)}b^{n}(s)\\
&  =\frac{x_{1}(s)-(s-\sqrt{s^{2}-1})x_{0}(s)}{2\sqrt{s^{2}-1}}\left(
s+\sqrt{s^{2}-1}\right)  ^{n}+\\
&  \qquad+\frac{(s+\sqrt{s^{2}-1})x_{0}(s)-x_{1}(s)}{2\sqrt{s^{2}-1}}\left(
s-\sqrt{s^{2}-1}\right)  ^{n}.
\end{align*}

In particular,%
\begin{equation}
T_{n}(s)=\frac{1}{2}\left(  s+\sqrt{s^{2}-1}\right)  ^{n}+\frac{1}{2}\left(
s-\sqrt{s^{2}-1}\right)  ^{n},\ s\in\mathbb{R}\backslash\lbrack-1,1].
\label{rch}%
\end{equation}

Finally, if $s\in S_{0}$ i.e., $s=\pm1$ then $a(s)=b(s)=s$ so again from
Corollary \ref{bi}%
\begin{align*}
x_{n}(s)  &  =[nx_{1}(s)-(n-1)b(s)x_{0}(s)]b^{n-1}(s)\\
&  =\left\{
\begin{array}
[c]{l}%
nx_{1}(1)-(n-1)x_{0}(1),\ \text{if }s=1\\
\lbrack nx_{1}(-1)+(n-1)x_{0}(-1)](-1)^{n},\ \text{if }s=-1
\end{array}
\right.  .
\end{align*}

These relations yield $T_{n}(1)=1$ and $T_{n}(-1)=(-1)^{n}$ for all $n;$ it
follows that formula (\ref{rch}) also holds for $s=\pm1.$ Though not
immediately apparent, expressions $T_{n}$ defined by (\ref{cch}) and
(\ref{rch}) taken together yield polynomials on the real line.
\end{example}

\section{Summary and future directions}

In this paper we studied semiconjugate factorizations of linear difference
equations in rings. For the typical (recursive) linear equation this method
supplements the standard methods that use modules and group actions.

In some cases, we obtained several sufficient conditions that are not
necessary for the existence of sc-factorizations. Relaxing or modifying the
hypotheses in these cases should lead to broader applicability. For example,
extending Corollary \ref{cp} to include negative coefficients or extending
Corollary \ref{c01} to include dependence on $n$ will lead, among other
things, to results such as Examples \ref{mof} and \ref{cheb} on recurrences of
many more special functions such as Bessel, Legendre, Hermite, etc.

Many challenges at different levels of generality remain. A ring with identity
may contain no unitary solutions of the homogeneous part of (\ref{Lde}). For
example, it is shown in \cite{sedf} that for certain primes, e.g.,
$2,3,7,23,\ldots$ the field $\mathbb{Z}_{p}$ contains no zero-avoiding (hence
unitary) solutions of (\ref{fib}). In such cases (\ref{Lde}) fails to have the
linear form symmetry (\ref{lfs}). However, nonexistence of a particular form
symmetry is not equivalent to the nonexistence of a sc-factorization; see the
remarks following the proof of Theorem \ref{sf1} about the (nonlinear)
inversion form symmetry. The question of whether in the absence of (\ref{lfs})
other types of form symmetry exist for which the factor or the cofactor
equation is linear, remains open.

If $R$ is not commutative then using different orders of multiplications of
coefficients $a_{j,n}$ with the variables $x_{n-j}$ in (\ref{Lde}) may result
in different equations (with the same set of coefficients). Not all the
results in this paper extend readily to these variants in the noncommutative cases.

Finally, linear difference equations may occur in nonrecursive forms; e.g.,
$a_{0,n}x_{n}+a_{1,n}x_{n-1}+\cdots+a_{k,n}x_{n-k}=b_{n}$ where the leading
coefficient $a_{0,n}$ is not a unit in the ring $R$ for infinitely many $n$
and the equation cannot be solved uniquely for $x_{n}$. Nonrecursive,
nonlinear difference equations may occur in conjunction with
\textit{recursive} linear equations; indeed, the characteristic equations
(\ref{es}) and (\ref{rica}) are nonrecursive polynomial equations. For
nonrecursive difference equations even such basic issues as the existence and
uniqueness of solutions is not assured. To illustrate, consider the linear
homogeneous equation
\begin{equation}
4x_{n+1}+6x_{n}+2x_{n-1}=0 \label{nrec}%
\end{equation}
in the finite ring $\mathbb{Z}_{8}$. Since 4 is a zero divisor in
$\mathbb{Z}_{8}$ the above equation is not recursive. Hence it does not unfold
to a map of $\mathbb{Z}_{8}\times\mathbb{Z}_{8}$ in the standard way and
semiconjugate factorization is not applicable. However, (\ref{nrec}) does
split into a triangular, factor/cofactor pair of equations by a simple
rearrangement of terms: $4(x_{n+1}+x_{n})+2(x_{n}+x_{n-1})=0$ which by
defining the new variable $t_{n}=x_{n}+x_{n-1}$ yields%
\begin{align}
4t_{n+1}+2t_{n}  &  =0,\label{nfac}\\
x_{n+1}  &  =-x_{n}+t_{n+1}. \label{cnfac}%
\end{align}

It is evident that every sequence $\{t_{n}\}$ in $\{0,4\}^{\mathbb{N}}$ is a
solution of the (nonrecursive) first-order factor equation (\ref{nfac}) in
$\mathbb{Z}_{8}.$ The (recursive) cofactor equation (\ref{cnfac}) then
generates a unique solution of (\ref{nrec}) in $\mathbb{Z}_{8}$ for each
solution of (\ref{nfac}); the table below lists the first few terms of a
sequence $\{t_{n}\}$ and the corresponding terms of the solution of
(\ref{nrec}) with initial values $x_{0}=1$, $x_{1}=3$ and $t_{1}=x_{1}%
+x_{0}=4$

\begin{center}%
\begin{tabular}
[c]{|c|c|c|c|c|c|c|c|c|c|c|c|c|c|c|c|c|}\hline
$n$ & 1 & 2 & 3 & 4 & 5 & 6 & 7 & 8 & 9 & 10 & 11 & 12 & 13 & 14 & 15 &
$\cdots$\\\hline
$t_{n}$ & 4 & 4 & 4 & 4 & 4 & 0 & 0 & 0 & 0 & 0 & 4 & 4 & 4 & 4 & 4 & $\cdots
$\\\hline
$x_{n}$ & 3 & 1 & 3 & 1 & 3 & 5 & 3 & 5 & 3 & 5 & 7 & 5 & 7 & 5 & 7 & $\cdots
$\\\hline
\end{tabular}

\end{center}

The same pair of initial values corresponds to infinitely many solutions of
(\ref{nrec}) depending on the choice of $\{t_{n}\};$ e.g.,

\begin{center}%
\begin{tabular}
[c]{|c|c|c|c|c|c|c|c|c|c|c|c|c|c|c|c|c|}\hline
$n$ & 1 & 2 & 3 & 4 & 5 & 6 & 7 & 8 & 9 & 10 & 11 & 12 & 13 & 14 & 15 &
$\cdots$\\\hline
$t_{n}$ & 4 & 0 & 4 & 0 & 0 & 4 & 0 & 0 & 0 & 4 & 0 & 0 & 0 & 0 & 4 & $\cdots
$\\\hline
$x_{n}$ & 3 & 5 & 7 & 1 & 7 & 5 & 3 & 5 & 3 & 1 & 7 & 1 & 7 & 1 & 3 & $\cdots
$\\\hline
\end{tabular}

\end{center}

By contrast, the situation is quite different in $\mathbb{Z}_{9}$ because 4 is
a unit and its reciprocal modulo 9 is 7. Thus upon dividing by 4, (\ref{nrec})
is reduced to the recursive linear equation $x_{n+1}+6x_{n}+5x_{n-1}=0$ or
equivalently, $x_{n+1}=3x_{n}+4x_{n-1}$ to which the methods discussed in this
paper are applicable (though in this simple case the sc-factorization may be
quickly obtained via the rearrangement of terms mentioned above). These
observations indicate that nonrecursive linear difference equations and the
challenging problems associated with them may turn out to be sources of
profound new ideas.

\end{document}